\newtheorem{theorem}{Theorem}[section]
\newtheorem{lemma}[theorem]{Lemma}
\newtheorem{proposition}[theorem]{Proposition}
\newtheorem{corollary}[theorem]{Corollary}
\theoremstyle{definition}
\newtheorem{definition}[theorem]{Definition}
\newtheorem{assumption}[theorem]{Assumption}
\theoremstyle{remark}
\newtheorem{remark}[theorem]{Remark}
\numberwithin{equation}{section}
\def\rr{{\mathbb R}}
\def\rn{{{\rr}^n}}
\def\zz{{\mathbb Z}}
\def\cc{{\mathbb C}}
\def\nn{{\mathbb N}}
\def\ca{{\mathcal A}}
\def\cf{{\mathcal F}}
\def\cm{{\mathcal M}}
\def\cp{{\mathcal P}}
\def\cs{{\mathcal S}}
\def\cx{{\mathcal X}}
\def\fz{\infty}
\def\az{\alpha}
\def\bz{\beta}
\def\bdz{\Delta}
\def\bgz{{\Gamma}}
\def\lz{\lambda}
\def\tz{\theta}
\def\lf{\left}
\def\r{\right}
\def\hs{\hspace{0.25cm}}
\def\ls{\lesssim}
\def\ov{\overline}
\def\noz{\nonumber}
\def\wz{\widetilde}
\def\wh{\widehat}
\def\st{\subset}
\def\com{\complement}
\def\dist{\mathop\mathrm{\,dist\,}}
\def\supp{\mathop\mathrm{\,supp\,}}
\def\atom{\mathop\mathrm{\,atom\,}}
\def\div{\mathop\mathrm{div}}
\def\essinf{\mathop\mathrm{\,ess\,inf\,}}
\def\esup{\mathop\mathrm{\,ess\,sup\,}}
\def\vp{{L^{p(\cdot)}(\rn)}}
\def\hp{{H^{p(\cdot)}(\rn)}}
\def\vhp{H_L^{p(\cdot)}(\rn)}
\def\atom{\mathbb{H}_{L,\,{\rm at}}^{p(\cdot),\,r}(\rn)}
\def\ath{H_{L,\,{\rm at}}^{p(\cdot),\,r}(\rn)}
\def\uz{\underline}
\def\rnn{\rr_+^{n+1}}
\def\dydt{\,\frac{dy\,dt}{t^{n+1}}}
\begin{document}

\setcounter{page}{1}

\title[Atomic Characterization of Hardy Spaces]
{Atomic Characterizations of Hardy Spaces
Associated to Schr\"{o}dinger Type Operators}

\author[J. Zhang \MakeLowercase{and} Z. Liu]
{Junqiang Zhang$^{1*}$ \MakeLowercase{and} Zongguang Liu$^{2}$}

\address{$^{1}$School of Science, China University of Mining and Technology-Beijing,
Beijing 100083, People's Republic of China}
\email{\textcolor[rgb]{0.00,0.00,0.84}{jqzhang@cumtb.edu.cn;
zhangjunqiang@mail.bnu.edu.cn}}

\address{$^{2}$School of Science, China University of Mining and Technology-Beijing,
Beijing 100083, People's Republic of China}
\email{\textcolor[rgb]{0.00,0.00,0.84}{liuzg@cumtb.edu.cn}}


\let\thefootnote\relax\footnote{Copyright 2018 by the Tusi Mathematical Research Group.}

\subjclass[2010]{Primary 42B30; Secondary 42B35, 35J10.}

\keywords{Hardy space, Schr\"{o}dinger type operator, variable exponent, and atom.}

\date{Received: xxxxxx; Revised: yyyyyy; Accepted: zzzzzz.
\newline \indent $^{*}$Corresponding author}

\begin{abstract}
In this article, the authors consider the Schr\"{o}dinger type
operator $L:=-{\rm div}(A\nabla)+V$ on $\mathbb{R}^n$ with $n\geq 3$,
where the matrix $A$ is symmetric and satisfies
uniformly elliptic condition and the nonnegative potential
$V$ belongs to the reverse H\"{o}lder class $RH_q(\mathbb{R}^n)$
with $q\in(n/2,\,\infty)$.
Let $p(\cdot):\ \mathbb{R}^n\to(0,\,1]$ be a variable exponent function
satisfying the globally $\log$-H\"{o}lder continuous condition.
The authors introduce the variable Hardy space $H_L^{p(\cdot)}(\mathbb{R}^n)$ associated to $L$
and establish its atomic characterization.
The atoms here are closer to the atoms of
variable Hardy space $H^{p(\cdot)}(\mathbb{R}^n)$ in spirit,
which further implies that $H^{p(\cdot)}(\mathbb{R}^n)$ is continuously embedded in
$H_L^{p(\cdot)}(\mathbb{R}^n)$.
\end{abstract} \maketitle

\section{Introduction and main results}

The theory of classical real Hardy spaces $H^p(\rn)$ was first
introduced by Stein and Weiss \cite{sw60} in the early 1960s, and
then was systematically developed by Fefferman and Stein \cite{fs72}.
One important feature of $H^p(\rn)$ is that it has so called
``atomic characterization'' (see \cite{c74,l78}).
This feature allows one to reduce the study of the
boundedness of operators on $H^p(\rn)$ to studying their behaviours on
single atoms.
As a generalization of classical Hardy spaces,
Nakai and Sawano \cite{ns12} introduced variable Hardy spaces $H^{p(\cdot)}(\rn)$,
established their atomic characterizations and
investigated their dual spaces.
Independently, Cruz-Uribe and Wang \cite{cw14} also studied the
variable Hardy spaces $H^{p(\cdot)}(\rn)$ with $p(\cdot)$ satisfying some conditions
slightly weaker than those used in \cite{ns12}.

On the other hand, notice that $H^p(\rn)$ is essentially associated with the Laplace
operator
$\bdz:=\sum_{j=1}^n\frac{\partial^2}{\partial x_j^2}$
(see, for example, \cite{dy051}).
In recent years, there has been a lot of attention paid to the
study of Hardy spaces associated with different operators, which has been
a very active research topic in harmonic analysis.
First, Auscher et al. \cite{adm05}, and then Duong and Yan
\cite{dy05,dy051}, introduced Hardy and BMO spaces associated with an operator
$L$ whose heat kernel has a pointwise Gaussian upper bound.
Then, Hofmann et al. \cite{hlmmy11} studied Hardy spaces associated with non-negative
self-adjoint operator $L$ satisfying the Davies-Gaffney estimates
and established its atomic characterization.
Different from the classical atoms, the atoms in \cite{hlmmy11}
are defined by replacing the standard vanishing moment condition into
a matter of membership in the range of the operator $L$.
Since then, much work has been done in the real variable theory of
Hardy spaces associated with different operators, see, for example,
\cite{hm09,jy10,y08}.
In particular, Dziuba\'{n}ski and Zienkiewicz \cite{dz99,dz02,dz03}
established an new atomic characterization of Hardy spaces $H_L^p(\rn)$
associated to Schr\"{o}dinger operator $L:=-\Delta+V$,
where $V$ belongs to a certain reverse H\"{o}lder class.
Compared with atoms defined in \cite{hlmmy11},
the atoms introduced by Dziuba\'{n}ski and Zienkiewicz are very different
and closer to the classical atoms of $H^p(\rn)$ in spirit,
since they also have some ``local" vanishing condition.

Recently, motivated by \cite{dz99,dz02,dz03},
Cao et al. \cite{ccyy14} established an atomic characterization
of Musielak-Orlicz-Hardy spaces $H_{\varphi,\,L}(\rn)$
associated to Schr\"{o}dinger operator $L=-\Delta+V$
and, as an application, studied the boundedness of second order Riesz transform $\nabla^2L^{-1}$
on $H_{\varphi,\,L}(\rn)$.
The atomic characterization of $H_{\varphi,\,L}(\rn)$ in \cite{ccyy14}
is similar to those of \cite{dz99,dz02,dz03},
however, Cao et al. used a totally different method.
Later, Yang \cite{y14} generalized the results of \cite{ccyy14} to
the Musielak-Orlicz-Hardy spaces associated to
Schr\"{o}dinger type operator $L:=-\div(A\nabla)+V$,
where $A$ is a matrix satisfying the uniform elliptic condition and
$V$ belongs to a certain reverse H\"{o}lder class.

In this paper, motivated by \cite{ccyy14,y14,yzz15},
we consider the Schr\"{o}dinger type operator
\begin{equation}\label{eq o}
L:=-\div(A\nabla)+V\ \ \text{on}\ \ \rn,\,n\geq 3,
\end{equation}
where $V$ is a nonnegative potential and $A:=\{a_{ij}\}_{1\le i,j\le n}$
is a matrix of measurable functions satisfying the following condition:
\begin{assumption}\label{as 1}
There exists a constant $\lz\in(0,1]$ such that,
for any $x,\,\xi\in\rn$,
$$a_{ij}(x)=a_{ji}(x)\quad \text{and}\quad
\lz|\xi|^2\le\sum_{i,j=1}^n a_{ij}(x)\xi_i\xi_j\le\lz^{-1}|\xi|^2.$$
\end{assumption}

Let $L=-\div(A\nabla)+V$ be as in \eqref{eq o},
where $V$ is a nonnegative potential on $\rn$
with $n\geq 3$ and belongs to the reverse H\"{o}lder class
$RH_q(\rn)$ for some $q\in(n/2,\fz)$.
In this paper, we establish an atomic characterization
of variable Hardy space $H_L^{p(\cdot)}(\rn)$ associated to $L$.
As an application, we show that the variable Hardy space $H^{p(\cdot)}(\rn)$
is continuously embedded in $H_L^{p(\cdot)}(\rn)$.
To state the main results, we first
recall some notation and definitions.

Let $\cp(\rn)$ be the set of all the measurable functions $p(\cdot):\ \rn\to (0,\,\fz)$
satisfying
\begin{equation}\label{eq var}
p_-:=\essinf_{x\in\rn}p(x)>0\ \ \text{and}\ \ p_+:=\esup_{x\in\rn}p(x)<\fz.
\end{equation}
A function $p(\cdot)\in\cp(\rn)$ is called a \emph{variable exponent function on $\rn$}.
The \emph{variable Lebesgue space $\vp$}
is a generalization of classical Lebesgue space,
via replacing the constant exponent $p$ by a variable exponent function
$p(\cdot):\ \rn\to(0,\,\fz)$,
which consists of all measurable functions $f$ on $\rn$ such that,
for some $\lz\in(0,\,\fz)$,
$\int_\rn [|f(x)|/\lz]^{p(x)}\,dx<\fz$,
equipped with the \emph{Luxemburg} (or known as the \emph{Luxemburg-Nakano}) {(quasi-)norm}
\begin{equation}\label{eq norm}
\|f\|_{\vp}:=\inf\lf\{\lz\in(0,\,\fz):\ \int_\rn
\lf[\frac{|f(x)|}{\lz}\r]^{p(x)}\,dx\le 1\r\}.
\end{equation}

Recall that a measurable function $g\in\cp(\rn)$ is said to be
\emph{globally $\log$-H\"{o}lder continuous},
denoted by $g\in C^{\log}(\rn)$, if there exist constants $C_1,\,C_2\in(0,\,\fz)$
and $g_\fz\in\mathbb{R}$ such that, for any $x,\,y\in\rn$,
\begin{equation*}
|g(x)-g(y)|\le \frac{C_1}{\log(e+1/|x-y|)}
\end{equation*}
and
\begin{equation*}
|g(x)-g_\fz|\le \frac{C_2}{\log(e+|x|)}.
\end{equation*}

The \emph{quadratic operator} $S_L$, associated to $L$, is defined by
setting, for any $f\in L^2(\rn)$ and $x\in\rn$,
\begin{equation}\label{eq quadratic}
S_L(f)(x):=\lf[\iint_{\bgz(x)}\lf|t^2Le^{-t^2L}(f)(y)\r|^2\dydt\r]^{1/2},
\end{equation}
where $\Gamma(x):=\{(y,\,t)\in\rn\times(0,\,\fz):\ |y-x|<t\}$
denotes the cone with vertex $x\in\rn$.

\begin{definition}\label{def vhp}
Let $p(\cdot)\in \cp(\rn)$ satisfy $p_+\in(0,\,1]$
and $L$ be an operator as in \eqref{eq o}.
The \emph{variable Hardy space} $H_{L}^{p(\cdot)}(\rn)$ is defined as the
completion of the space
\begin{align*}
\lf\{f\in L^2(\rn):\ \|S_L(f)\|_{\vp}<\fz\r\}
\end{align*}
with respect to the \emph{quasi-norm} $\|f\|_{H_L^{p(\cdot)}(\rn)}:=\|S_L(f)\|_{\vp}$.
\end{definition}

Motivated by \cite{ccyy14,y14}, we define the atoms associated to $L$ as follows.
\begin{definition}\label{def atom}
Let $p(\cdot)\in\cp(\rn)$ with $p_+\in(0,1]$ and $m(\cdot,V)$ be as in \eqref{eq aux}.
For any given $r\in(1,\fz]$, a measurable function $a$ on $\rn$
is called an \emph{$(p(\cdot),\,r)$-atom} associated with the ball $B:=B(x_0,r_0)$ of $\rn$, if
\begin{enumerate}
\item[(i)] $\supp a\st B$;

\item[(ii)] $\|a\|_{L^r(B)}\le |B|^{1/r}\|\chi_B\|^{-1}_{L^{p(\cdot)}(\rn)}$;

\item[(iii)] $\int_{\rn} a(x)\,dx=0$ if $r_0<[m(x_0,V)]^{-1}$.
\end{enumerate}
\end{definition}

\begin{remark}
atoms of Definition \ref{def atom} are closer to atoms of $\hp$
(see Definition \ref{def atom-1} below).
In fact, it is easy to see that, for any $r\in(1,\,\fz]$,
if $a$ is an $(p(\cdot),\,r,\,0)$-atom of $\hp$,
then $a$ is an $(p(\cdot),\,r)$-atom of Definition \ref{def atom}.
\end{remark}

In what follows, for any $p(\cdot)\in\cp(\rn)$ with $0<p_-\le p_+\le 1$,
any sequences $\{\lz_j\}_{j\in\nn}\st\cc$ and $\{B_j\}_{j\in\nn}$ of balls in $\rn$, define
\begin{equation}\label{eq norm-2}
\ca(\{\lz_j\}_{j\in\nn},\,\{B_j\}_{j\in\nn}):=\lf\|\lf\{\sum_{j\in\nn}
\lf[\frac{|\lz_j|\chi_{B_j}}{\|\chi_{B_j}\|_{\vp}}\r]^{p_-}\r\}^{\frac1{p_-}}\r\|_{\vp}.
\end{equation}

\begin{definition}\label{def at-Hardy}
Let $L$ be as in \eqref{eq o}, $p(\cdot)\in \cp(\rn)$ with $p_+\in(0,\,1]$ and $r\in(1,\fz]$.
For a measurable function $f$ on $\rn $,
$f=\sum_{j=1}^\fz \lz_j a_j$ is called an
\emph{atomic $(p(\cdot),\,r)$-representation} of $f$
if $\{a_j\}_{j\in\nn}$ is a family of $(p(\cdot),\,r)$-atoms,
the summation converges in $L^2(\rn)$ and $\{\lz_j\}_{j\in\nn}\st\cc$ satisfies that
$\ca(\{\lz_j\}_{j\in\nn},\,\{B_j\}_{j\in\nn})<\fz$,
where, for any $j\in\nn$, $B_j$ is the ball associated with $a_j$
and $\ca(\{\lz_j\}_{j\in\nn},\{B_j\}_{j\in\nn})$ is as in \eqref{eq norm-2}.
Let
\begin{align}\label{eq at-Hardy}
\atom:=\{f:\ f\ \text{has an atomic}\ (p(\cdot),\,r)\text{-representation}\}.
\end{align}
Then the \emph{variable atomic Hardy space $\ath$} is defined as
the completion of $\atom$ with respect to the
\emph{quasi-norm}
\begin{align*}
\|f\|_{\ath}
&:=\inf\Bigg\{\ca\lf(\{\lz_j\}_{j\in\nn},\,\{B_j\}_{j\in\nn}\r):\ \\
&\qquad\qquad f=\sum_{j=1}^\fz\lz_j a_j\ \text{is an atomic}\
(p(\cdot),\,r)\text{-representation}\Bigg\},
\end{align*}
where the infimum is taken over all the atomic
$(p(\cdot),\,r)$-representations of $f$ as above.
\end{definition}

The following theorem is the main result of this paper, which
establishes the atomic characterization of $\vhp$.
\begin{theorem}\label{thm-2}
Let $L$ be as in \eqref{eq o} with $A$ satisfying Assumption \ref{as 1},
$\mu_0\in(0,1]$ as in \eqref{eq mu}, $p(\cdot)\in C^{\log}(\rn)$ with
$\frac{n}{n+\mu_0}<p_-\le p_+\le1$ and $r\in(1,\,\fz)$ such that $\mu_0+\frac{n}{r}>\frac{n}{p_-}$.
Then $\ath$ and $\vhp$ coincide with equivalent quasi-norms.
\end{theorem}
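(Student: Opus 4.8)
The plan is to prove the two-sided embedding $\ath = \vhp$ by establishing the inclusions $\ath \hookrightarrow \vhp$ and $\vhp \hookrightarrow \ath$ separately, both with norm control. For the ``easy'' direction $\ath \hookrightarrow \vhp$, I would first show that every $(p(\cdot),\,r)$-atom $a$ associated with a ball $B$ satisfies $\|S_L(a)\|_{\vp} \lesssim \|\chi_B\|_{\vp}^{-1}$, uniformly in $a$ and $B$. This is the heart of the matter and splits into two regimes according to whether $r_0 \geq [m(x_0,V)]^{-1}$ (the ``large ball'' case, where no cancellation is available) or $r_0 < [m(x_0,V)]^{-1}$ (the ``small ball'' case, where $\int a = 0$). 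In the small-ball case one uses the off-diagonal (Davies--Gaffney / Gaussian-type) estimates for $t^2Le^{-t^2L}$ together with the Hölder regularity exponent $\mu_0$ and the vanishing moment to gain decay away from $B$; in the large-ball case one must instead exploit the decay of the semigroup coming from the potential $V$ and the auxiliary function $m(\cdot,V)$ (this is exactly where the hypothesis $V\in RH_q$ with $q>n/2$ and the subordination-type estimates for the Schrödinger-type semigroup enter). Once the uniform atomic bound is in hand, I would pass from a single atom to an atomic representation $f=\sum_j \lz_j a_j$ by decomposing $\rn$ into annuli around each $B_j$, using the $\ell^{p_-}$-subadditivity of $\|\cdot\|_{\vp}^{p_-}$ on the near parts and the decay estimates on the far parts, and then invoking a variable-exponent analogue of the Fefferman--Stein / ``$\ca$-functional'' inequality to conclude $\|S_L(f)\|_{\vp} \lesssim \ca(\{\lz_j\}_j,\{B_j\}_j)$; a density argument then extends this to all of $\ath$.

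For the converse direction $\vhp \hookrightarrow \ath$, I would start from $f\in L^2(\rn)$ with $\|S_L(f)\|_{\vp}<\fz$ and perform a Calderón--Zygmund-type stopping-time decomposition on the level sets $O_k := \{x: S_L(f)(x) > 2^k\}$, using a Whitney covering $\{Q_{k,i}\}_i$ of each $O_k$. The standard tent-space / reproducing-formula machinery associated to the operator $L$ (via an identity of the form $f = c\int_0^\infty (t^2L)^N e^{-t^2L}\,\frac{dt}{t}$ applied to $f$, together with the $L^2$ tent-space decomposition over the Whitney cubes) yields a decomposition $f = \sum_{k,i} \lz_{k,i}\, a_{k,i}$ converging in $L^2$, where each $a_{k,i}$ is supported in a fixed dilate of $Q_{k,i}$, has the correct $L^r$-size normalization, and — crucially — carries a vanishing moment $\int a_{k,i}=0$ whenever the relevant cube is small relative to $[m(\cdot,V)]^{-1}$. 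The fact that the molecules produced this way are genuinely atoms in the sense of Definition~\ref{def atom} (rather than the range-of-$L$ ``molecules'' of \cite{hlmmy11}) rests on the local nature of the Schrödinger-type operator encoded in $m(\cdot,V)$ and follows the strategy of \cite{ccyy14,y14}. Finally one bounds $\ca(\{\lz_{k,i}\}, \{Q_{k,i}\})$ by $\|S_L(f)\|_{\vp}$ using that $\lz_{k,i}\approx 2^k |Q_{k,i}|^{1/2}$ (up to the size normalization), the bounded overlap of the Whitney cubes, and the variable-exponent good-$\lambda$ / maximal-function estimates available under $p(\cdot)\in C^{\log}(\rn)$ with $p_- > n/(n+\mu_0)$; the constraint $\mu_0 + n/r > n/p_-$ is precisely what makes the $L^r$-size estimate on the atoms compatible with the $\vp$-quasi-norm bookkeeping.

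The main obstacle, I expect, is the uniform atomic estimate $\|S_L(a)\|_{\vp}\lesssim \|\chi_B\|_{\vp}^{-1}$ in the large-ball regime $r_0 \geq [m(x_0,V)]^{-1}$, where there is no cancellation to exploit: one must extract quantitative decay of $t^2Le^{-t^2L}(a)$ both in the spatial variable (away from $B$) and in the scale variable $t$ purely from the presence of the potential, which requires delicate kernel estimates for the Schrödinger-type semigroup $e^{-tL}$ and careful use of the reverse-Hölder property of $V$ and the doubling/subadditivity properties of $m(\cdot,V)$. A secondary technical difficulty is that $\vp$ is only a quasi-norm and not subadditive, so all the summation-over-atoms steps must be routed through the $\ell^{p_-}$-convexification inequality \eqref{eq norm-2} and a variable-exponent Fefferman--Stein vector-valued maximal inequality, which forces the hypotheses $p_- > n/(n+\mu_0)$ and $r > n/(\mu_0 + n/p_- - n)\,$… equivalently $\mu_0 + n/r > n/p_-$ … to be used in a fairly tight way. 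Controlling the convergence of the atomic series in $L^2(\rn)$ (needed for Definition~\ref{def at-Hardy}) simultaneously with convergence in the $H_L^{p(\cdot)}$-quasi-norm is a further bookkeeping point that I would handle by first proving everything for $f\in L^2\cap H_L^{p(\cdot)}$ and then passing to the completion.
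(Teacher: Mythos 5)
Your high-level plan matches the paper's: the two inclusions are treated as separate propositions, with $\ath \st \vhp$ proved via a uniform annular estimate $\|S_L(a)\|_{L^r(U_i(B))} \ls 2^{-i\theta}|B|^{1/r}\|\chi_B\|_{\vp}^{-1}$ (splitting cases by $r_B \gtrless [m(x_B,V)]^{-1}$ and using Lemma \ref{lem 1-2}) followed by Lemma \ref{lem-key} and the Fefferman--Stein vector-valued inequality, and $\vhp \st \ath$ via the tent-space atomic decomposition (Lemma \ref{lem 2.0}) of $F(x,t)=t^2Le^{-t^2L}f(x)$ and a projection $\pi_L$. However, two steps in your sketch of the converse direction are not bookkeeping but are precisely where the proof does its real work, and as stated they would fail.

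First, the reproducing formula you propose, $f = c\int_0^\fz (t^2L)^N e^{-t^2L}f\,\frac{dt}{t}$, does \emph{not} produce compactly supported atoms: $e^{-t^2L}$ has a globally supported kernel, so $\pi_L(A)$ would only be a molecule. The paper instead builds $\pi_L$ from $\Phi(t\sqrt{L})$, where $\Phi$ is the Fourier transform of a compactly supported odd $\phi$; the finite speed propagation of $\cos(t\sqrt L)$ then forces $\supp \pi_L(A)\st 2B$, which is essential if the output is to satisfy Definition \ref{def atom}(i). Second, the vanishing moment $\int a=0$ for a small ball is \emph{not} an automatic output of the projection: $\pi_L(A)$ lies in the range of $L$, not of $\Delta$, and does not integrate to zero in general. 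The paper must explicitly split $\pi_L(A)$ into three pieces by subtracting averages at the scale $2B$ and at the critical scale $2^{i_0+1}B$ where $2^{i_0}r_B\sim[m(x_B,V)]^{-1}$, and the whole argument hinges on the kernel cancellation estimate \eqref{eq 2.16} for $\int_{B(x_B,2[m(x_B,V)]^{-1})}\partial_z K_z(x,y)\,dx$; the two excess-mass pieces are then pushed onto balls of critical radius, where Definition \ref{def atom}(iii) no longer demands cancellation. Your citation of \cite{ccyy14,y14} gestures at this, but your sketch presents it as automatic. Conversely, the large-ball estimate for $S_L(a)$ that you flag as the main obstacle in the easy direction is comparatively routine once one uses the factor $[1+\sqrt{t}\,m(\cdot,V)]^{-N}$ in Lemma \ref{lem 1-2}(i) together with Lemma \ref{lem aux}(iii) to gain $2^{-i\eta N}$ on the far range $t>2^{i\eta}r_B$.
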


The proof of Theorem \ref{thm-2} is in Subsection \ref{s3-1}.
Motivated by the proof of \cite[Theorem 2.3]{ccyy14},
we prove Theorem \ref{thm-2} by using the atomic decomposition of variable tent space,
the holomorphic functional calculus of $L$ and some subtle estimates associated to $L$
(see Lemma \ref{lem 2.0} and \eqref{eq 2.16} below).
Besides, comparing with the proof of \cite[Theorem 2.3]{ccyy14},
we also use the finite speed propagation property
of the nonnegative self-adjoint operator $L$,
which, in some sense, simplifies the proof.

\begin{remark}\label{rem-8}
In particular, if $A:=I$ is the identity matrix, namely, $L=-\Delta+V$,
and $p(\cdot)=p\in(\frac{n}{n+\mu_0},1]$ is a constant exponent,
then Theorem \ref{thm-2} completely covers \cite[Theorem 1.11]{dz02}.
\end{remark}

Moreover, as an application of Theorem \ref{thm-2},
we show that variable Hardy space $H^{p(\cdot)}(\rn)$ is
continuously embedded in $\vhp$.
The variable Hardy space $\hp$ is first introduced by
E. Nakai and Y. Sawano \cite{ns12} (independently, by Cruz-Uribe and Wang \cite{cf13}).
Let $\cs(\rn)$ be the \emph{space of all Schwartz functions} and $\cs'(\rn)$
the \emph{space of all Schwartz distributions}. For any $N\in\nn$, define
\begin{align*}
\cf_N(\rn):=\lf\{\psi\in\cs(\rn):\ \sum_{\bz\in\zz_+^n,\,|\bz|\le N}\sup_{x\in\rn}(1+|x|)^N
\lf|D^\bz\psi(x)\r|\le 1\r\},
\end{align*}
where, for any $\bz:=(\bz_1,\,\ldots,\,\bz_n)\in\zz_+^n$,
$|\bz|:=\bz_1+\cdots +\bz_n$ and $D^\bz:=(\frac{\partial}{\partial x_1})^{\bz_1}\cdots
(\frac{\partial}{\partial x_n})^{\bz_n}$.
For any $N\in\nn$, the \emph{grand maximal function} $\cm_F$ is defined by setting,
for any $f\in\cs'(\rn)$ and $x\in\rn$,
\begin{align*}
\cm_N(f)(x):=\sup\{|\psi_t\ast f(x)|:\ t\in(0,\,\fz),\, \psi\in \cf_N(\rn)\},
\end{align*}
where, for any $t\in(0,\,\fz)$ and $\xi\in\rn$, $\psi_t(\xi):=t^{-n}\psi(\xi/t)$.

\begin{definition}[\cite{ns12}]\label{def hardy}
Let $p(\cdot)\in \cp(\rn)$ and $N\in(\frac{n}{p_-}+n+1,\,\fz)$.
Then the \emph{variable Hardy space} $H^{p(\cdot)}(\rn)$ is defined by setting
\begin{align*}
H^{p(\cdot)}(\rn):=\lf\{f\in\cs'(\rn):\ \|f\|_{H^{p(\cdot)}(\rn)}:=\|\cm_N(f)\|_{\vp}<\fz\r\}.
\end{align*}
\end{definition}

\begin{corollary}\label{cor-1}
Let $L$ and $p(\cdot)$ be as in Theorem \ref{thm-2}.
Assume that $\frac{n}{n+\mu_0}<p_-\le p_+\le1$ with
$\mu_0\in(0,1]$ as in \eqref{eq mu}.
Then $H^{p(\cdot)}(\rn)$ is continuously embedded in $H_L^{p(\cdot)}(\rn)$,
namely, there exists a positive constant $C$ such that,
for any $f\in H^{p(\cdot)}(\rn)$,
\begin{align*}
\|f\|_{H_L^{p(\cdot)}(\rn)}\le C\|f\|_{H^{p(\cdot)}(\rn)}.
\end{align*}
\end{corollary}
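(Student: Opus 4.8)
The plan is to obtain Corollary~\ref{cor-1} as a direct consequence of Theorem~\ref{thm-2}, the remark following Definition~\ref{def atom} (every $(p(\cdot),\,r,\,0)$-atom of $\hp$ is a $(p(\cdot),\,r)$-atom associated with $L$), and the atomic characterization of the variable Hardy space $\hp$ due to Nakai and Sawano~\cite{ns12}. The idea is simply that an $\hp$-atomic decomposition of $f$, once its atoms are reinterpreted as $L$-atoms, is automatically an atomic $(p(\cdot),\,r)$-representation in the sense of Definition~\ref{def at-Hardy}, so that Theorem~\ref{thm-2} controls $\|f\|_{\vhp}$ by the associated atomic quasi-norm, which in turn is dominated by $\|f\|_{\hp}$.

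To carry this out, I would first fix $r\in(1,\,\fz)$ with $\mu_0+\frac{n}{r}>\frac{n}{p_-}$; such an $r$ exists because the hypothesis $p_->\frac{n}{n+\mu_0}$ is equivalent to $\mu_0+n>\frac{n}{p_-}$ and hence $\frac{n}{p_-}-\mu_0<n$. This hypothesis also forces $n(\frac1{p_-}-1)<\mu_0\le1$, so the vanishing of the integral is the only moment condition an $\hp$-atom must satisfy, and the atomic decomposition of~\cite{ns12} may be taken with $(p(\cdot),\,r,\,0)$-atoms. Next I would pass to the dense subspace $\hp\cap L^2(\rn)$ of $\hp$: for $f$ in this intersection, \cite{ns12} supplies $(p(\cdot),\,r,\,0)$-atoms $\{a_j\}_{j\in\nn}$ of $\hp$, balls $\{B_j\}_{j\in\nn}$, and coefficients $\{\lz_j\}_{j\in\nn}\st\cc$ such that $f=\sum_{j\in\nn}\lz_j a_j$, with convergence in $\cs'(\rn)$ and, since $f\in L^2(\rn)$, also in $L^2(\rn)$, and $\ca(\{\lz_j\}_{j\in\nn},\,\{B_j\}_{j\in\nn})\le C\|f\|_{\hp}$.

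By the remark after Definition~\ref{def atom}, each $a_j$ is a $(p(\cdot),\,r)$-atom associated with $L$, so $f=\sum_{j\in\nn}\lz_j a_j$ is an atomic $(p(\cdot),\,r)$-representation of $f$; hence $f\in\atom$ and $\|f\|_{\ath}\le\ca(\{\lz_j\}_{j\in\nn},\,\{B_j\}_{j\in\nn})\le C\|f\|_{\hp}$. Theorem~\ref{thm-2}, whose hypotheses on $p(\cdot)$ and $r$ are met by our choices, then yields $\|f\|_{\vhp}\le C\|f\|_{\ath}\le C\|f\|_{\hp}$ for every $f\in\hp\cap L^2(\rn)$. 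Finally, since $\hp\cap L^2(\rn)$ is dense in $\hp$ and $\vhp$ is complete, I would pass to the limit along an approximating sequence to extend the estimate, and thus the continuous embedding, to all of $\hp$, checking in the standard way that the extension is independent of the chosen sequence.

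The one step that is not essentially bookkeeping is the claim used above that the $\hp$-atomic decomposition of an $f\in\hp\cap L^2(\rn)$ converges in $L^2(\rn)$, which is exactly what is required for it to be an atomic $(p(\cdot),\,r)$-representation under Definition~\ref{def at-Hardy}. This is a feature of the Nakai--Sawano construction: the atoms are built from the Calder\'{o}n--Zygmund decomposition of the level sets $\{x\in\rn:\ \cm_N(f)(x)>2^k\}$, and the corresponding series is seen to converge in $L^2(\rn)$ when $f\in L^2(\rn)$. I expect this to be the point demanding the most care; the remainder is just the assembly of Theorem~\ref{thm-2}, the remark, and routine density arguments.
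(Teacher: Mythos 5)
Your proof is correct and follows essentially the same path as the paper's: decompose $f\in H^{p(\cdot)}(\rn)\cap L^2(\rn)$ into $H^{p(\cdot)}$-atoms via the Nakai--Sawano/Sawano atomic theory with $L^2(\rn)$ convergence, observe those atoms are $(p(\cdot),\,r)$-atoms in the sense of Definition~\ref{def atom}, invoke Theorem~\ref{thm-2}, and conclude by density. The only cosmetic difference is that the paper uses $(p(\cdot),\,\infty,\,s)$-atoms where you use $(p(\cdot),\,r,\,0)$-atoms (both give $(p(\cdot),\,r)$-atoms for the fixed $r$), and your explicit flagging of the $L^2$-convergence issue matches the paper's appeal to ``Lemma~\ref{lem 5.31}(ii) and its proof.''
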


This article is organized as follows.
In Section \ref{s2}, we give some preliminaries on
the Schr\"{o}dinger operator $L$ and variable
Lebesgue space $L^{p(\cdot)}(\rn)$.
In Section \ref{s3}, we give the proofs of
Theorem \ref{thm-2} and Corollary \ref{cor-1}.

We end this section by making some conventions on notation.
In this article,
we denote by $C$ a positive constant which is independent of the main parameters,
but it may vary from line to line.
We also use $C_{(\az, \bz,\ldots)}$ to denote a positive constant depending on
the parameters $\az$, $\bz$, $\ldots$.
The \emph{symbol $f\ls g$} means that $f\le Cg$.
If $f\ls g$ and $g\ls f$,  we then write $f\sim g$.
Let $\nn:=\{1,\,2,\,\ldots\}$, $\zz_+:=\nn\cup\{0\}$.
For any measurable subset $E$ of $\rn$, we denote by $E^\com$ the
\emph{set $\rn\setminus E$}.
For any $r\in\mathbb{R}$, the \emph{symbol} $\lfloor r\rfloor$
denotes the largest integer $m$ such that $m\le r$.
For any $\mu\in(0,\,\pi)$, let
\begin{align}\label{eq 0.1}
\Sigma_\mu^0:=\lf\{z\in\cc\setminus\{0\}:\ |\arg z|<\mu\r\}.
\end{align}
For any ball
$B:=B(x_B,r_B):=\{y\in\rn:\ |x-y|<r_B\}\st\rn$
with $x_B\in\rn$ and $r_B\in(0,\,\fz)$, $\az\in(0,\,\fz)$ and $j\in\nn$,
we let $\az B:=B(x_B,\az r_B)$,
\begin{align}\label{eq-ujb}
U_0(B):=B\ \ \ \text{and}\ \ \ U_j(B):= (2^jB)\setminus (2^{j-1}B).
\end{align}
For any $p\in[1,\,\fz]$, $p'$ denotes its conjugate number,
namely, $1/p+1/p'=1$.

\section{Preliminaries}\label{s2}

In this section, we recall some notions and
results on the Schr\"{o}dinger type operator $L=-\div(A\nabla)+V$
and variable Lebesgue space $\vp$.

We first recall the definition of the auxiliary function $m(\cdot,V)$
introduced by Shen \cite[Definition 2.1]{sh} and its properties.
Let $V\in RH_q(\rn)$, $q\in(n/2,\,\fz)$, and $V\not\equiv 0$.
For any $x\in\rn$, the auxiliary function $m(x,\,V)$ is defined by
\begin{equation}\label{eq aux}
\frac{1}{m(x,\,V)}:=\sup\lf\{r\in(0,\,\fz):\ \frac{1}{r^{n-2}}\int_{B(x,r)}V(y)\,dy\le 1\r\}.
\end{equation}

For the auxiliary function $m(\cdot,V)$, we have the following Lemma \ref{lem aux},
which is just \cite[Lemma 1.4]{sh}.

\begin{lemma}[\cite{sh}]\label{lem aux}
Let $m(\cdot,V)$ be as in \eqref{eq aux}. Then there exist positive constants
$\wz{C},C$ and $k_0$ such that, for any $x,y\in\rn$,
\begin{enumerate}
\item[(i)] $C^{-1}m(x,\,V)\le m(y,\,V)\le Cm(x,\,V)$ if $|x-y|\le \wz{C}[m(x,\,V)]^{-1}$;

\item[(ii)] $m(y,\,V)\le C[1+|x-y|m(x,\,V)]^{k_0}m(x,\,V)$;

\item[(iii)] $m(y,\,V)\geq C^{-1} [1+|x-y|m(x,\,V)]^{-\frac{k_0}{k_0+1}}m(x,\,V)$.
\end{enumerate}
\end{lemma}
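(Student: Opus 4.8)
The plan is to reduce all three estimates to two scaling properties of the function
\[\psi(x,r):=\frac{1}{r^{n-2}}\int_{B(x,r)}V(y)\,dy,\qquad x\in\rn,\ r\in(0,\fz),\]
and then to read off (i), (ii) and (iii) by elementary ball inclusions and algebra. By \eqref{eq aux} and the continuity of $r\mapsto\psi(x,r)$, if I set $\rho(x):=[m(x,V)]^{-1}$ then $\psi(x,\rho(x))=1$. First I would establish, via H\"older's inequality on $B(x,r)$ with exponents $q,q'$ followed by the reverse H\"older inequality $V\in RH_q(\rn)$ on the larger ball $B(x,R)$, that for all $x\in\rn$ and $0<r\le R<\fz$,
\[\psi(x,r)\ls\lf(\frac rR\r)^{2-\frac nq}\psi(x,R),\]
the exponent $2-\frac nq$ being positive since $q>\frac n2$; in particular $\psi(x,\cdot)$ is, up to a multiplicative constant, nondecreasing, which (together with $V\not\equiv0$, $V\in RH_q(\rn)$, so that $0<\int_{B(x,r)}V<\fz$) also forces $\psi(x,r)\to0$ as $r\to0^+$ and $\psi(x,r)\to\fz$ as $r\to\fz$, making $\rho(x)$ a well-defined positive number. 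Secondly I would show that the measure $V(y)\,dy$ is doubling, obtaining a constant $C\ge1$ and an exponent $D\in[n-\frac nq,\fz)$ with $\int_{B(x,R)}V\le C(R/r)^{D}\int_{B(x,r)}V$ for $0<r\le R<\fz$.

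With these in hand, I would prove (ii) as follows. Fix $x,y\in\rn$ and write $s:=|x-y|$, $a:=\rho(x)$, $b:=\rho(y)$; the goal is $b\gs a(1+s/a)^{-k_0}$ for a suitable $k_0>0$. If $b\ge a$ this is trivial, so suppose $b<a$. From $B(x,a)\st B(y,a+s)$ one gets $\int_{B(y,a+s)}V\ge\int_{B(x,a)}V=a^{n-2}$, and combining this with the first scaling inequality applied at center $y$ between radii $b<a+s$, namely $1=\psi(y,b)\ls(b/(a+s))^{2-n/q}\psi(y,a+s)$, yields $\psi(y,a+s)\gs((a+s)/b)^{2-n/q}$. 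On the other hand $B(y,a+s)\st B(x,a+2s)$, so the doubling inequality (applied at center $x$ between radii $a$ and $a+2s$) gives $\int_{B(y,a+s)}V\le\int_{B(x,a+2s)}V\ls(1+2s/a)^{D}a^{n-2}$, hence $\psi(y,a+s)\ls(1+2s/a)^{D}\ls(1+s/a)^{D}$. Comparing the two bounds for $\psi(y,a+s)$ and using $(a+s)/b=(a/b)(1+s/a)$, I obtain $(a/b)^{2-n/q}\ls(1+s/a)^{D-2+n/q}$, i.e. $b\gs a(1+s/a)^{-k_0}$ with $k_0:=\frac{D-2+n/q}{2-n/q}$, which is positive because $D\ge n-\frac nq=(n-2)+(2-\frac nq)$ and $n\ge3$. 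This is exactly (ii).

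Then (iii) follows from (ii) by symmetry: exchanging $x$ and $y$ in (ii) and rearranging gives $b^{1+k_0}\ls a(b+s)^{k_0}$, and splitting into the cases $b\le s$ and $b>s$ (so that $b+s\sim s$, respectively $b+s\sim b$) and solving for $b$ yields $b\ls a(1+s/a)^{k_0/(k_0+1)}$, which is (iii) with the same $k_0$. Finally (i) is immediate: if $|x-y|\le\wz C[m(x,V)]^{-1}$, i.e. $|x-y|\,m(x,V)\le\wz C$, then the factors $(1+|x-y|m(x,V))^{k_0}$ in (ii) and $(1+|x-y|m(x,V))^{-k_0/(k_0+1)}$ in (iii) are pinched between two constants depending only on $\wz C$ and $k_0$, whence $m(y,V)\sim m(x,V)$.

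The main obstacle lies in the two scaling inequalities: extracting them from the single hypothesis $V\in RH_q(\rn)$. The subcritical exponent $2-\frac nq$ must be tracked exactly (this is where $q>\frac n2$ enters), and the doubling of $V(y)\,dy$ is not quite a one-line consequence of reverse H\"older: the naive doubling estimate on $B(x,2r)$ produces a geometric factor that need not dominate the $RH_q$ constant, so I would instead run the H\"older--reverse-H\"older argument on a sufficiently thin annulus $B(x,(1+\delta)r)\setminus B(x,r)$ and iterate finitely many times. Once the two inequalities are available with the correct exponents $2-\frac nq$ and $D\ge n-\frac nq$, the remaining bookkeeping in the proofs of (ii), (iii) and (i) is routine, and the precise value $k_0=\frac{D-2+n/q}{2-n/q}$ together with the $k_0/(k_0+1)$ loss in (iii) drops out automatically.
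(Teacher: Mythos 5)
Your proof is correct: the two ingredients you isolate --- the scaling estimate $\psi(x,r)\ls(r/R)^{2-\frac nq}\psi(x,R)$ obtained from H\"older plus the reverse H\"older inequality (with $2-\frac nq>0$ exactly because $q>\frac n2$), and the doubling of $V(y)\,dy$ via the thin-annulus iteration --- do give (ii) by the ball-inclusion comparison at the critical radii, then (iii) by symmetrizing and splitting into the cases $b\le s$ and $b>s$, and (i) by pinching the factors when $|x-y|m(x,V)\le\wz C$. The paper itself offers no proof, quoting the statement as \cite[Lemma 1.4]{sh}, and your argument is essentially a reconstruction of Shen's original one, which rests on precisely these two facts.
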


The following lemma is \cite[Theorem 4]{at98}.
\begin{lemma}[\cite{at98}]\label{lem 1-1}
Let $L_0:=-\div(A\nabla)$ with $A$ satisfying Assumption \ref{as 1}
and $\{e^{-tL_0}\}_{t\geq 0}$ the heat semigroup generated by $L_0$.
Then the kernels $h_t(x,y)$ of the heat semigroup $\{e^{-tL_0}\}_{t\geq 0}$ are
continuous and there exists a constant $\az_0\in(0,1]$ such that,
for any given $\az\in(0,\az_0)$,
\begin{equation*}
|h_t(x+h,y)-h_t(x,y)|+|h_t(x,y+h)-h_t(x,y)|\le
\frac{C}{t^{n/2}}\lf[\frac{|h|}{\sqrt{t}}\r]^\az e^{-c\frac{|x-y|^2}{t}},
\end{equation*}
where $t\in(0,\,\fz)$, $x,y,h\in\rn$ with $|h|\le\sqrt{t}$
and $C,c$ are positive constants independent of $t,x,y,h$.
\end{lemma}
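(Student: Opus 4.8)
The plan is to recognize this as a standard consequence of two cornerstones of the De Giorgi--Nash--Moser theory for divergence-form parabolic equations with bounded measurable coefficients --- Aronson's Gaussian heat kernel bound and the parabolic interior Hölder estimate --- which I would assemble rather than reprove. Throughout, fix $y\in\rn$ and regard $u(s,z):=h_s(z,y)$ as a weak solution of $\partial_s u-\div_z(A\nabla_z u)=0$ on $(0,\fz)\times\rn$; that the semigroup $\{e^{-sL_0}\}_{s\ge0}$, built from the symmetric uniformly elliptic form attached to $A$, admits such a kernel off the diagonal is routine.

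\textbf{Step 1: Gaussian upper bound.} Since $A$ is symmetric, $L_0$ is a nonnegative self-adjoint operator on $L^2(\rn)$. Davies' perturbation method (conjugating $L_0$ by $e^{\rho\phi}$ for bounded Lipschitz $\phi$ with $|\nabla\phi|\le1$) yields the Davies--Gaffney off-diagonal $L^2$ estimates for $\{e^{-sL_0}\}_{s>0}$; combined with the ultracontractivity bound $\|e^{-sL_0}\|_{L^1(\rn)\to L^\fz(\rn)}\ls s^{-n/2}$, which comes from the $n$-dimensional Nash inequality and $L^2$ energy decay, one obtains, for all $s\in(0,\fz)$ and a.e.\ $z,y\in\rn$,
\begin{equation*}
0\le h_s(z,y)\le\frac{C}{s^{n/2}}\,e^{-c\frac{|z-y|^2}{s}}.
\end{equation*}
Alternatively one may invoke Aronson's classical pointwise bound directly, which holds even without symmetry.

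\textbf{Step 2: parabolic Hölder regularity and assembly.} The parabolic De Giorgi--Nash--Moser theorem produces $\az_0=\az_0(n,\lz)\in(0,1]$ such that any weak solution $u$ of $\partial_s u-\div(A\nabla u)=0$ on a parabolic cylinder $Q=(\tau-4\varrho^2,\,\tau+4\varrho^2)\times B(\zeta,2\varrho)\st(0,\fz)\times\rn$ obeys, on the concentric half-cylinder $\tfrac12 Q=(\tau-\varrho^2,\,\tau+\varrho^2)\times B(\zeta,\varrho)$,
\begin{equation*}
|u(s_1,z_1)-u(s_2,z_2)|\le C\lf(\frac{|z_1-z_2|+|s_1-s_2|^{1/2}}{\varrho}\r)^{\az_0}\sup_{Q}|u|,
\end{equation*}
with $C=C(n,\lz)$; in particular $h_t(x,y)$ is locally Hölder, hence continuous, on $(0,\fz)\times\rn\times\rn$. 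Now fix $t\in(0,\fz)$ and $x,h\in\rn$ with $|h|\le\sqrt t$, set $\varrho:=\sqrt t/2$ and $\zeta:=x+h/2$, and choose $\tau\sim t$ (possible because $|h|\le\sqrt t$) so that $Q$ stays strictly inside $(0,\fz)\times\rn$ while $(t,x),(t,x+h)\in\tfrac12 Q$. Applying the estimate to $u(s,z)=h_s(z,y)$ gives
\begin{equation*}
|h_t(x+h,y)-h_t(x,y)|\ls\lf(\frac{|h|}{\sqrt t}\r)^{\az_0}\sup_{(s,z)\in Q}|h_s(z,y)|.
\end{equation*}
On $Q$ one has $s\sim t$ and $|z-x|\ls\sqrt t$, so Step 1 gives $|h_s(z,y)|\ls t^{-n/2}e^{-c|z-y|^2/t}$ there; if $|x-y|\ls\sqrt t$ the factor $e^{-c|x-y|^2/t}$ is bounded below, while if $|x-y|\gg\sqrt t$ then $|z-y|\ge\tfrac12|x-y|$ for every $z$ occurring, so in both cases $\sup_{(s,z)\in Q}|h_s(z,y)|\ls t^{-n/2}e^{-c'|x-y|^2/t}$. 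Combining the last two displays bounds $|h_t(x+h,y)-h_t(x,y)|$ by $C t^{-n/2}(|h|/\sqrt t)^{\az_0}e^{-c'|x-y|^2/t}$; the twin bound for $|h_t(x,y+h)-h_t(x,y)|$ follows from the symmetry $h_t(x,y)=h_t(y,x)$ (valid since $A$ is symmetric), or by repeating the argument in the second variable. Any $\az\in(0,\az_0)$ then works in the statement.

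\textbf{Expected main obstacle.} All the real analysis lives in Steps 1 and 2, i.e.\ in the Nash/De Giorgi/Moser/Aronson machinery, which I would quote rather than rebuild. In the self-contained part the only genuine care is geometric and elementary: choosing a parabolic cylinder of radius $\sim\sqrt t$ that simultaneously lies strictly inside the time-positive region and contains the two evaluation points at the common time $t$ (the constraint $|h|\le\sqrt t$ being what guarantees this can be done), followed by the case split $|x-y|\lessgtr\sqrt t$ used to pull the Gaussian weight out of the cylinder down to the base point $x$.
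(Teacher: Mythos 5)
The paper gives no proof of this lemma at all: it is quoted verbatim as \cite[Theorem 4]{at98}, and the proof in that reference is precisely the machinery you assemble (Aronson/Nash Gaussian upper bound plus the parabolic De Giorgi--Nash--Moser interior H\"older estimate, transferred to the second variable by symmetry of $A$), so your proposal is correct and matches the intended source. The only nitpick is the borderline case $|h|=\sqrt{t}$, where your containment $|x-\zeta|=|h|/2\le\varrho=\sqrt{t}/2$ in the half-cylinder is not strict; enlarging the cylinder slightly (or applying the oscillation estimate on a $3/4$-cylinder) fixes this trivially, and since $|h|/\sqrt{t}\le 1$ the exponent $\az_0$ indeed yields the stated bound for every $\az\in(0,\az_0]$.
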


The following lemma is \cite[Lemma 2.6]{y14}.
\begin{lemma}[\cite{y14}]\label{lem 1-2}
Let $L$ be as in \eqref{eq o} with $A$ satisfying Assumption \ref{as 1}
and $V\in RH_q(\rn)$, $q\in(n/2,\,\fz)$.
Assume that $K_t$ is the kernel of the heat semigroup $\{e^{-tL}\}_{t\geq 0}$ and
let
\begin{align}\label{eq mu}
\mu_0:=\min\lf\{\az_0,2-\frac{n}{q}\r\},
\end{align}
where $\az_0\in(0,1]$ is as in Lemma \ref{lem 1-1}.
\begin{enumerate}
\item[(i)] For any given $k,\,N\in\nn$,
there exist positive constants $C_{(N)}$ and $c$ such that,
for any $t\in(0,\,\fz)$ and every $(x,y)\in\rn\times\rn$,
\begin{equation}\label{eq Gauss1}
0\le \lf|\frac{\partial}{\partial t^k}K_t(x,\,y)\r|
\le\frac{C_{(k,\,N)}}{t^{k+n/2}}e^{-c\frac{|x-y|^2}{t}}
\lf[1+\sqrt{t}m(x,\,V)+\sqrt{t}m(y,\,V)\r]^{-N}.
\end{equation}

\item[(ii)] For any given $k,\,N\in\nn$ and $\mu\in(0,\,\mu_0)$,
there exist positive constants $C_{(k,\,N,\,\mu)}$ and $c$ such that,
for any $t\in(0,\,\fz)$ and every $x,y,h\in\rn$ with $|h|\le\sqrt{t}$,
\begin{align}\label{eq Gauss2}
&\lf|\frac{\partial}{\partial t^k}K_t(x+h,\,y)-\frac{d}{dt^k}K_t(x,\,y)\r|
+\lf|\frac{\partial}{\partial t^k}K_t(x,\,y+h)-\frac{d}{dt^k}K_t(x,\,y)\r|\\
&\hs\le \frac{C_{(k,\,N,\,\mu)}}{t^{k+n/2}}\lf[\frac{|h|}{\sqrt{t}}\r]^\mu
e^{-c\frac{|x-y|^2}{t}}
\lf[1+\sqrt{t}m(x,\,V)+\sqrt{t}m(y,\,V)\r]^{-N}.\noz
\end{align}
\end{enumerate}
\end{lemma}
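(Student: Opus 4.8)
Since this estimate is \cite[Lemma 2.6]{y14}, we only indicate the structure of the argument. It rests on four ingredients: (a) the pointwise domination of $K_t$ by the heat kernel $h_t$ of the free operator $L_0:=-\div(A\nabla)$, which reduces the Gaussian bound to the classical Aronson estimate and, for the regularity, to Lemma \ref{lem 1-1}; (b) a Fefferman--Phong-type inequality for $V\in RH_q$ together with the doubling and slow-variation properties of the auxiliary function $m(\cdot,V)$ from Lemma \ref{lem aux} (see \cite{sh}), which produce the polynomial decay factor $[1+\sqrt t\,m(x,V)+\sqrt t\,m(y,V)]^{-N}$ for every $N$; (c) the analyticity of the semigroup $\{e^{-tL}\}_{t>0}$ on a sector, used to pass from the case $k=0$ to $k\ge1$; and (d) interior parabolic regularity for $\partial_t u+Lu=0$ (De Giorgi--Nash--Moser theory, valid since $V$ lies in a parabolic Kato class), which yields the Hölder continuity \eqref{eq Gauss2}.

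For part (i) with $k=0$: since $A$ is symmetric and $V\ge0$, the operator $L$ is nonnegative and self-adjoint, and the Trotter product formula --- together with the positivity-preserving property of $e^{-tL_0}$ --- gives the pointwise domination $0\le K_t(x,y)\le h_t(x,y)$ for all $t>0$ and $x,y\in\rn$. Aronson's classical upper bound for $h_t$ then yields $0\le K_t(x,y)\ls t^{-n/2}e^{-c|x-y|^2/t}$, which is already \eqref{eq Gauss1} with $k=0$ in the regime $\sqrt t\,m(x,V)\sim 1$, where the decay factor is $\sim 1$. When $\sqrt t\,m(x,V)\gg1$, the decay is generated by the large potential mass that $V\in RH_q$ forces into $B(x,\sqrt t)$: from the self-improvement of reverse H\"older classes, the definition \eqref{eq aux} of $m(\cdot,V)$, and Lemma \ref{lem aux}(ii)--(iii) one obtains a Fefferman--Phong-type bound of the form
$$\int_{B(x,\sqrt t)}\frac{V(z)}{|x-z|^{n-2}}\,dz\gs\lf(\sqrt t\,m(x,V)\r)^{\dz}\qquad\text{for some }\dz\in(0,\fz).$$
Conceptually, inserting this into the Feynman--Kac representation $K_t(x,y)=\mathbb E_{x,y}^{t}\lf[\exp\big(-\int_0^tV(b_s)\,ds\big)\r]h_t(x,y)$ (with $b$ the diffusion generated by $L_0$) and using $\exp(-u)\le C_N u^{-N}$ gives \eqref{eq Gauss1} with $k=0$ for every $N$; the analytic proof of \cite{y14} reaches the same bound via an Agmon/Davies-type weighted energy argument, or by iterating the Duhamel identity
$$K_t(x,y)=h_t(x,y)-\int_0^t\!\!\int_{\rn}h_{t-s}(x,z)\,V(z)\,K_s(z,y)\,dz\,ds,$$
in both cases with the above Fefferman--Phong bound as the quantitative input. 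By the symmetry of $L$, the factor with $m(y,V)$ appears as well.

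For part (i) with $k\ge1$: since $L$ is nonnegative and self-adjoint, $\{e^{-tL}\}$ extends to a bounded analytic semigroup on the sector $\{z\in\cc\setminus\{0\}:\ |\arg z|<\pi/2\}$, and the domination argument above carries over with $t$ replaced by $\mathrm{Re}\,z$, so that $K_z(x,y)$ obeys the $k=0$ version of \eqref{eq Gauss1} for $z$ in a proper subsector. Writing
$$\frac{\partial^k}{\partial t^k}K_t(x,y)=\frac{k!}{2\pi i}\oint_{|z-t|=t/2}\frac{K_z(x,y)}{(z-t)^{k+1}}\,dz$$
and estimating on the circle produces the additional factor $t^{-k}$, giving \eqref{eq Gauss1} in general. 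For part (ii): weak solutions of $\partial_t u+Lu=0$ enjoy interior H\"older regularity by the De Giorgi--Nash--Moser theory (applicable because $V\in RH_q$ with $q>n/2$ belongs to the parabolic Kato class): on a parabolic cylinder $Q$ of size $\sim\sqrt t$,
$$\lf|u(x+h,t)-u(x,t)\r|\ls\lf[\frac{|h|}{\sqrt t}\r]^{\mu}\esup_{Q}|u|,\qquad |h|\le\sqrt t,$$
for some $\mu\in(0,\mu_0)$; the restriction $\mu_0=\min\{\az_0,\,2-n/q\}$ in \eqref{eq mu} records that the H\"older exponent is capped both by the regularity of $h_t$ (the exponent $\az_0$ of Lemma \ref{lem 1-1}) and by the Morrey-type integrability of $V\in RH_q$ (the exponent $2-n/q$). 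Applying this with $u(\cdot,t)=K_t(\cdot,y)$ --- equivalently, combining the H\"older estimate of Lemma \ref{lem 1-1} for $h_t$ with the Duhamel expansion --- and then inserting the Gaussian and $m(\cdot,V)$-decay from part (i) yields \eqref{eq Gauss2}; the time-derivative version again follows from the Cauchy integral formula.

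The main obstacle is obtaining the decay factor $[1+\sqrt t\,m(x,V)+\sqrt t\,m(y,V)]^{-N}$ for \emph{every} $N$, not merely for $N=1$: the Duhamel formula gains only one power per application, and an unwound $N$-fold iteration must be estimated with constants tracked precisely enough that the $N$-fold convolution of Gaussians still sums to a convergent series, which requires uniform control of the accumulated potential integrals $\int V$ over arbitrarily many dyadic annuli --- precisely the point at which the self-improvement of $RH_q$ and the doubling and slow-variation of $m(\cdot,V)$ in Lemma \ref{lem aux} are indispensable (and, in the probabilistic picture, the point at which one must bound the exponential functional of $V$ along the diffusion). Controlling the interplay between the Gaussian tail and this auxiliary-function decay in the iterated integrals is the one genuinely delicate step; by comparison, the analyticity argument yielding the time derivatives in (i) and the parabolic regularity yielding (ii) are routine.
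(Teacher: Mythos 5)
The paper itself offers no proof of this lemma: it is imported verbatim as \cite[Lemma 2.6]{y14} (which in turn rests on Kurata-type heat kernel estimates for $-\div(A\nabla)+V$ with $V\in RH_q$), so there is no in-paper argument to compare yours against line by line. Your outline is consistent with how the cited result is actually proved: pointwise domination $0\le K_t\le h_t$ by positivity and Trotter, the Aronson/Auscher--Tchamitchian bounds and H\"older continuity of $h_t$ (Lemma \ref{lem 1-1}), a Fefferman--Phong-type inequality together with the properties of $m(\cdot,V)$ in Lemma \ref{lem aux} to generate the factor $[1+\sqrt{t}\,m(x,V)+\sqrt{t}\,m(y,V)]^{-N}$, Cauchy's integral formula on a sector for the time derivatives, and a Duhamel/perturbation argument for part (ii), whose exponent is capped by $\mu_0=\min\{\az_0,\,2-n/q\}$ exactly for the reasons you state.

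Two calibration points, neither fatal. First, in the literature the decay for \emph{every} $N$ is normally not obtained by tracking constants through an $N$-fold Duhamel iteration, as you suggest in your closing paragraph, but by first proving the estimate with one fixed small exponent $\delta>0$ (via Fefferman--Phong plus a Caccioppoli or subsolution estimate) and then bootstrapping with the semigroup identity $e^{-tL}=(e^{-tL/k})^{k}$ and Lemma \ref{lem aux}(ii)--(iii); this sidesteps the delicate accumulation of potential integrals you single out as the main obstacle. Second, for part (ii) the quantitative range $\mu\in(0,\mu_0)$ really comes from the perturbation route (H\"older continuity of $h_t$ with exponent $\az_0$, plus the $L^q$-integrability of $V$ giving the exponent $2-n/q$); the De Giorgi--Nash--Moser interior estimate you invoke only yields some unspecified H\"older exponent, so it cannot by itself deliver the stated cap --- you do hedge by offering the Duhamel alternative, which is the one that works. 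With these caveats, your sketch is a faithful, if schematic, account of the proof of the cited lemma rather than a complete argument, which matches the status the statement has in this paper.
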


\begin{remark}\label{rem-2}
By Lemma \ref{lem 1-2}(i), we know that
the heat kernel $K_t$ satisfies the Gaussian upper bound.
From this and \cite[(3.2)]{y08}, we deduce that, for any $p\in (1,\,\fz)$,
the quadratic operator $S_L$ (see \eqref{eq quadratic})
is bounded on $L^p(\rn)$.
\end{remark}

The \emph{Hardy-Littlewood maximal operator} $\cm$ is defined by setting, for any $f\in L^1_{\rm loc}(\rn)$
and $x\in\rn$,
\begin{equation}\label{eq h-l}
\cm(f)(x):=\sup_{B\ni x}\frac{1}{|B|}\int_B |f(y)|\,dy,
\end{equation}
where the supremum is taken over all balls $B$ of $\rn$ containing $x$.

The following lemma establishes the boundedness of $\cm$ on $\vp$,
which is just \cite[Theorem 4.3.8]{dhhr11} (see also \cite[Theorem 3.16]{cf13}).
\begin{lemma}[\cite{dhhr11}]\label{lem 1-3}
Let $p(\cdot)\in C^{\log}(\rn)$ and $1<p_-\le p_+<\fz$. Then there exists a positive
constant $C$ such that, for any $f\in\vp$,
$$\|\cm(f)\|_{\vp}\le C\|f\|_{\vp}.$$
\end{lemma}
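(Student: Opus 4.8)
The inequality is the fundamental theorem of Diening and of Cruz-Uribe--Fiorenza--Neugebauer on the maximal operator (see \cite[Section 4.3]{dhhr11}), and the plan is to follow the classical route, in which the two halves of the hypothesis $p(\cdot)\in C^{\log}(\rn)$ --- local log-H\"older regularity and log-H\"older decay at infinity --- are used to ``freeze'' the exponent on small balls and on large balls respectively, while the strict inequality $p_->1$ is what keeps every frozen exponent away from the forbidden endpoint~$1$. Writing $\rho_{p(\cdot)}(g):=\int_\rn|g(x)|^{p(x)}\,dx$, the homogeneity built into \eqref{eq norm} reduces matters to finding an absolute constant $C$ such that $\rho_{p(\cdot)}(f)\le1$ implies $\rho_{p(\cdot)}(\cm(f)/C)\le1$; since $p_+<\fz$, it is in fact enough to prove $\rho_{p(\cdot)}(\cm(f))\ls\rho_{p(\cdot)}(f)+1$ for all such normalized $f$. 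Splitting $|f|=|f|\chi_{\{|f|\le1\}}+|f|\chi_{\{|f|>1\}}$ and using both $(a+b)^{p(x)}\le2^{p_+}(a^{p(x)}+b^{p(x)})$ and the sublinearity of $\cm$, one may then handle the two pieces of $f$ separately.

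\textbf{The key averaging lemma.} For a ball $B$ put $p_-(B):=\essinf_{x\in B}p(x)$ and $p_+(B):=\esup_{x\in B}p(x)$. The crux is the Cruz-Uribe--Fiorenza--Neugebauer pointwise estimate: for a fixed $m>n$ and every $f$ with $\rho_{p(\cdot)}(f)\le1$ and every ball $B$,
\begin{equation*}
\lf(\frac{1}{|B|}\int_B|f(y)|\,dy\r)^{p_-(B)}\ls\frac{1}{|B|}\int_B|f(y)|^{p(y)}\,dy+\frac{1}{|B|}\int_B\frac{dy}{(e+|y|)^{m}},
\end{equation*}
together with the complementary version carrying the exponent $p_+(B)$ when the left-hand average exceeds~$1$. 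To prove it one separates the case $r_B\le1$, where the local log-H\"older bound gives $p_+(B)-p_-(B)\ls 1/\log(e+1/r_B)$, hence $|B|^{p_-(B)-p_+(B)}\ls1$, so that $B$ may be treated as carrying the single exponent $p_-(B)\ge p_->1$ and one applies Jensen's inequality plus the elementary constant-exponent averaging bound, from the case $r_B>1$, where the decay hypothesis pins $p_-(B)$ and $p_+(B)$ near $p_\fz\ge p_->1$ and the normalization forces $\frac1{|B|}\int_B|f|\ls1$; the contributions of $\{|f|\le1\}$ and $\{|f|>1\}$ are controlled using $|f|^{p_-(B)}\le|f|^{p(\cdot)}$ on $\{|f|>1\}$ and a direct estimate on $\{|f|\le1\}$. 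Raising to $p(x)$ and taking the supremum over balls $B\ni x$ then controls $[\cm(f)(x)]^{p(x)}$ by $\cm(|f|^{p(\cdot)})(x)$ plus a term coming from the fixed integrable function $(e+|\cdot|)^{-m}$.

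\textbf{From the lemma to the theorem.} Feeding the averaging lemma into the usual bookkeeping --- splitting according to the size of $|f|$, working on the finite-measure level sets $\{\cm(f)>\tau\}$, and invoking the classical weak-$(1,1)$ bound (applied to $|f|^{p(\cdot)}$) together with the boundedness of $\cm$ on $L^{s}(\rn)$ for constant $s\in(1,\fz)$ --- yields the weak-type inequality $\|\tau\chi_{\{\cm(f)>\tau\}}\|_{\vp}\ls\|f\|_{\vp}$; restricting to finite-measure level sets is precisely what keeps the (false) $L^1$-endpoint out of the argument. One then upgrades to the strong-type bound: either by interpolation inside the scale of variable Lebesgue spaces between two weak-type endpoints $L^{r_0(\cdot)}(\rn)$ and $L^{r_1(\cdot)}(\rn)$ with exponents in $C^{\log}(\rn)$, $1<r_0^-$ and $r_1^+<\fz$, chosen so that $1/p(\cdot)$ is a fixed convex combination of $1/r_0(\cdot)$ and $1/r_1(\cdot)$ (possible because $1<p_-\le p_+<\fz$), or, equivalently, via Diening's criterion identifying the boundedness of $\cm$ on $L^{p(\cdot)}(\rn)$ with a uniform averaging condition on $p(\cdot)$ that $C^{\log}(\rn)$ exponents with $1<p_-\le p_+<\fz$ satisfy. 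Either way one arrives at $\|\cm(f)\|_{\vp}\le C\|f\|_{\vp}$, completing the proof.

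\textbf{Main obstacle.} The hard part is the averaging lemma, and within it the behaviour at infinity: transferring from $p(x)$ to $p_\fz$ via the decay condition and exhibiting a single integrable majorant that dominates the large-ball error uniformly over all normalized $f$. Both halves of $p(\cdot)\in C^{\log}(\rn)$ are genuinely needed (the conclusion fails for merely continuous $p$, and also fails without the decay condition), and $p_->1$ is indispensable both for freezing onto exponents above~$1$ and for the passage from weak to strong type; routing the proof through a weak-type inequality and interpolation is the device that circumvents the unboundedness of $\cm$ on $L^1(\rn)$.
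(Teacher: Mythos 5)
The paper offers no proof of this lemma: it is quoted directly from \cite[Theorem 4.3.8]{dhhr11} (see also \cite[Theorem 3.16]{cf13}), so the only meaningful comparison is with the proofs in those sources. Your sketch does follow their core mechanism: the two halves of the $C^{\log}(\rn)$ hypothesis are used exactly as you describe, the local condition to freeze the exponent on small balls via $|B|^{p_-(B)-p_+(B)}\ls 1$, the decay condition to compare $p(x)$ with $p_\fz$ on large balls, producing an averaging estimate with an integrable error of the form $(e+|\cdot|)^{-m}$. Up to that point your outline is sound and consistent with the cited literature (the estimate goes back to Cruz-Uribe, Fiorenza and Neugebauer, and to Diening).

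The step that does not hold together as written is the passage from the averaging lemma to the theorem. Raising your averaging estimate to the power $p(x)$ and taking suprema over balls gives, as you say, $[\cm(f)(x)]^{p(x)}\ls \cm(|f|^{p(\cdot)})(x)+h(x)$ with $h$ integrable; but $|f|^{p(\cdot)}$ is merely in $L^1(\rn)$, on which $\cm$ is unbounded, so this can only produce a weak-type (level-set) bound, and the upgrade you then invoke is not an available tool: a Marcinkiewicz-type interpolation theorem between two \emph{variable-exponent weak-type} endpoints is not standard (real interpolation in the variable scale is delicate and is not how \cite{dhhr11} or \cite{cf13} conclude), while Diening's class-$\mathcal{A}$ criterion is itself a theorem at least as deep as the statement being proved, so invoking it is essentially circular. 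The standard way to finish --- and the place where $p_->1$ genuinely enters --- is to keep the constant exponent $p_-$ on the outside: for $f$ with $\int_\rn|f(x)|^{p(x)}\,dx\le1$ one proves the pointwise bound $[\cm(f)(x)]^{p(x)}\le C\{[\cm(|f|^{p(\cdot)/p_-})(x)]^{p_-}+h(x)\}$, integrates, and applies the classical boundedness of $\cm$ on the constant-exponent space $L^{p_-}(\rn)$ to the function $|f|^{p(\cdot)/p_-}$, whose $L^{p_-}$-modular is exactly $\int_\rn|f|^{p(\cdot)}\,dx\le1$. This yields the strong modular estimate directly, and the homogeneity of the Luxemburg norm \eqref{eq norm} then gives $\|\cm(f)\|_{\vp}\le C\|f\|_{\vp}$ without any weak-type detour. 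With that repair your argument matches the cited proof; as it stands, the final step is a genuine gap.
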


The following Fefferman-Stein vector-valued inequality of $\cm$ on $\vp$ is
proved in \cite[Corollary 2.1]{cfmp06}.
\begin{lemma}[\cite{cfmp06}]\label{lem fs}
Let $q\in(1,\,\fz)$ and $p(\cdot)\in C^{\log}(\rn)$ with $p_-\in(1,\,\fz)$.
Then there exists a positive constant $C$ such that, for any sequence $\{f_j\}_{j\in\nn}$
of measurable functions,
\begin{align*}
\lf\|\lf\{\sum_{j=1}^\fz[\cm(f_j)]^q\r\}^{\frac1q}\r\|_{\vp}
\le C\lf\|\lf(\sum_{j=1}^\fz|f_j|^q\r)^{\frac1q}\r\|_{\vp}.
\end{align*}
\end{lemma}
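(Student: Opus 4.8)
The plan is to deduce this vector-valued inequality from the classical weighted Fefferman--Stein maximal inequality by means of Rubio de Francia extrapolation carried out directly inside the variable Lebesgue space, exactly as in \cite{cfmp06}. Since $p_-\in(1,\fz)$, I would fix once and for all an exponent $p_0\in(1,p_-)$. The starting point is the classical fact (due to Andersen and John) that there exists a constant depending only on $n$, $q$, $p_0$ and $[w]_{A_{p_0}}$ such that, for every Muckenhoupt weight $w\in A_{p_0}$ and every finite family $g_1,\dots,g_N$,
\[
\int_\rn\lf[\sum_{j=1}^N[\cm(g_j)]^q\r]^{p_0/q}w\,dx\le C\int_\rn\lf[\sum_{j=1}^N|g_j|^q\r]^{p_0/q}w\,dx;
\]
in the language of extrapolation, the pair $\bigl(F_N,G_N\bigr):=\bigl((\sum_{j\le N}[\cm(g_j)]^q)^{1/q},(\sum_{j\le N}|g_j|^q)^{1/q}\bigr)$ satisfies $\|F_N\|_{L^{p_0}(w)}\le C\|G_N\|_{L^{p_0}(w)}$ for all $w\in A_{p_0}$, with $C$ independent of $N$.

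The next step is to transfer this pairwise estimate to $\vp$. First I would observe that, because $p_0\in(1,p_-)$ and $p(\cdot)\in C^{\log}(\rn)$, the rescaled exponent $p(\cdot)/p_0$ lies in $C^{\log}(\rn)$ and satisfies $1<p_-/p_0\le p_+/p_0<\fz$; consequently its conjugate exponent $\bigl(p(\cdot)/p_0\bigr)'$ is again in $C^{\log}(\rn)$ and has $1<\bigl((p(\cdot)/p_0)'\bigr)_-\le\bigl((p(\cdot)/p_0)'\bigr)_+<\fz$. By Lemma \ref{lem 1-3}, the maximal operator $\cm$ is therefore bounded on $L^{(p(\cdot)/p_0)'}(\rn)$; denote its operator norm there by $\|\cm\|$. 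Then, for any nonnegative $h\in L^{(p(\cdot)/p_0)'}(\rn)$ with $\|h\|_{L^{(p(\cdot)/p_0)'}(\rn)}\le1$, I would form the Rubio de Francia iterate $\mathcal{R}h:=\sum_{k=0}^{\fz}(2\|\cm\|)^{-k}\cm^{k}h$, where $\cm^{k}$ is the $k$-th iterate of $\cm$; it satisfies $h\le\mathcal{R}h$ pointwise, $\|\mathcal{R}h\|_{L^{(p(\cdot)/p_0)'}(\rn)}\le2$, and $\mathcal{R}h\in A_1\subset A_{p_0}$ with $[\mathcal{R}h]_{A_1}\le2\|\cm\|$. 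Choosing $g_j=f_j$ for $j\le N$, and using the scaling identity $\|F_N\|_{\vp}^{p_0}=\|F_N^{p_0}\|_{L^{p(\cdot)/p_0}(\rn)}$, the conjugate-norm characterization in $L^{p(\cdot)/p_0}(\rn)$, Hölder's inequality for variable exponents, and finally the weighted inequality above with $w=\mathcal{R}h$, one arrives at $\|F_N\|_{\vp}\le C\|G_N\|_{\vp}\le C\|(\sum_{j\in\nn}|f_j|^q)^{1/q}\|_{\vp}$ with $C$ independent of $N$.

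Finally, since $F_N$ increases pointwise to $(\sum_{j\in\nn}[\cm(f_j)]^q)^{1/q}$, I would let $N\to\fz$ and invoke the Fatou property of the Luxemburg quasi-norm to obtain the stated inequality.

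The genuinely decisive hypothesis, and the step I expect to be the crux, is the transfer in the second paragraph: it rests entirely on $p_->1$, which is precisely what guarantees that both $p(\cdot)/p_0$ (for a suitable $p_0>1$) and, more importantly, its conjugate $(p(\cdot)/p_0)'$ are log-H\"older exponents with finite upper bound, so that Lemma \ref{lem 1-3} applies in the dual space and the Rubio de Francia weight $\mathcal{R}h\in A_{p_0}$ can be constructed there. The remaining ingredients --- the verification that $(p(\cdot)/p_0)'\in C^{\log}(\rn)$, the identification $(\vp)'\cong L^{p'(\cdot)}(\rn)$ together with the exponent rescaling, and the truncation/limiting argument --- are routine; once the weight is in place, the analytic content is carried by the purely weighted, exponent-free inequality of the first paragraph.
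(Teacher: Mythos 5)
Your extrapolation argument is correct, and it is essentially the proof of the cited result itself: the paper does not prove this lemma but quotes it as \cite[Corollary 2.1]{cfmp06}, where it is obtained exactly as you describe, by extrapolating the Andersen--John weighted vector-valued maximal inequality into $\vp$ via the Rubio de Francia iteration built from the boundedness of $\cm$ on $L^{(p(\cdot)/p_0)'}(\rn)$ (which is where $p_->1$ and the log-H\"older condition enter, through Lemma \ref{lem 1-3}). Your truncation to finitely many $f_j$ plus the Fatou property of the Luxemburg norm handles the limiting step, so there is no gap.
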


The following lemma is a particular case of \cite[Lemma 2.4]{yz17},
which is is a slight variant of \cite[Lemma 4.1]{s13}.
\begin{lemma}[\cite{yz17}]\label{lem-key}
Let $\kappa\in[1,\,\fz)$,
$p(\cdot)\in C^{\log}(\rn)$, $\underline{p}:=\min\{p_-,\,1\}$ and
$r\in[1,\,\fz]\cap(p_+,\,\fz]$, where $p_-$ and $p_+$ are as in \eqref{eq var}.
Then there exists a positive constant $C$ such that, for any sequence $\{B_j\}_{j\in\nn}$
of balls in $\rn$, $\{\lz_j\}_{j\in\nn}\st\mathbb{C}$ and functions $\{a_j\}_{j\in\nn}$ satisfying
that, for any $j\in\nn$, $\supp a_j\st \kappa B_j$ and $\|a_j\|_{L^r(\rn)}\le |B_j|^{1/r}$,
\begin{align}\label{eq-key}
\lf\|\lf(\sum_{j=1}^\fz|\lz_j a_j|^{\uz{p}}\r)^{\frac{1}{\uz{p}}}\r\|_{\vp}
\le C\kappa^{n(\frac1{\uz{p}}-\frac1r)}\lf\|\lf(\sum_{j=1}^\fz|\lz_j
\chi_{B_j}|^{\uz{p}}\r)^{\frac{1}{\uz{p}}}\r\|_{\vp}.
\end{align}
\end{lemma}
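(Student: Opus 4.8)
The plan is to prove \eqref{eq-key} by a duality argument in the scaled variable Lebesgue space $L^{p(\cdot)/s}(\rn)$, where $s:=\uz p=\min\{p_-,1\}\in(0,1]$. Observe first that $s\le p_-\le p_+<r$, so that $p(\cdot)/s$ has lower index $p_-/s\ge1$ and $L^{p(\cdot)/s}(\rn)$ is a Banach function space, for which the norm conjugate formula (the generalized H\"older inequality in variable Lebesgue spaces, see \cite{dhhr11,cf13}) holds:
\[
\lf\|g\r\|_{L^{p(\cdot)/s}(\rn)}\sim\sup\lf\{\int_\rn|g\phi|:\ \|\phi\|_{L^{(p(\cdot)/s)'}(\rn)}\le1\r\},
\]
where $(p(\cdot)/s)'$ denotes the pointwise conjugate exponent (with the convention $1'=\fz$). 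Since, for nonnegative functions, $\|(\sum_j|\lz_j a_j|^s)^{1/s}\|_{\vp}^s=\|\sum_j|\lz_j a_j|^s\|_{L^{p(\cdot)/s}(\rn)}$ and likewise for the right-hand side of \eqref{eq-key}, it suffices to bound $\int_\rn(\sum_j|\lz_j a_j|^s)\phi$ for an arbitrary nonnegative $\phi$ with $\|\phi\|_{L^{(p(\cdot)/s)'}(\rn)}\le1$ (which, by density, may be taken bounded with compact support).

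For each fixed $j$ I would apply H\"older's inequality on $\kappa B_j$ with exponents $r/s$ and $(r/s)'$, together with $\|a_j\|_{L^r(\rn)}\le|B_j|^{1/r}$, to get
\[
\int_\rn|\lz_j a_j|^s\phi\le|\lz_j|^s|B_j|^{s/r}|\kappa B_j|^{1/(r/s)'}\lf(\frac1{|\kappa B_j|}\int_{\kappa B_j}\phi^{(r/s)'}\r)^{1/(r/s)'}.
\]
Because $B_j\st\kappa B_j$, the average in the last factor is at most $\inf_{z\in B_j}[\cm(\phi^{(r/s)'})(z)]^{1/(r/s)'}$; moreover $|B_j|^{s/r}|\kappa B_j|^{1/(r/s)'}=\kappa^{n(1-s/r)}|B_j|$ and $|B_j|\inf_{z\in B_j}[\cdots]\le\int_{B_j}[\cm(\phi^{(r/s)'})(z)]^{1/(r/s)'}\,dz$. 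Summing over $j$ and applying the norm conjugate formula once more, in the pair $L^{p(\cdot)/s}(\rn)$ and $L^{(p(\cdot)/s)'}(\rn)$, yields
\[
\sum_j\int_\rn|\lz_j a_j|^s\phi\ls\kappa^{n(1-s/r)}\lf\|\sum_j|\lz_j|^s\chi_{B_j}\r\|_{L^{p(\cdot)/s}(\rn)}\lf\|[\cm(\phi^{(r/s)'})]^{1/(r/s)'}\r\|_{L^{(p(\cdot)/s)'}(\rn)}.
\]

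It then remains to show the last factor is $\ls1$. Writing $w(\cdot):=(p(\cdot)/s)'/(r/s)'$, this factor equals $\|\cm(\phi^{(r/s)'})\|_{L^{w(\cdot)}(\rn)}^{1/(r/s)'}$. I would check that $w(\cdot)\in C^{\log}(\rn)$ — which follows since $1/w(\cdot)=\frac{r}{r-s}(1-s/p(\cdot))$ and $1/p(\cdot)\in C^{\log}(\rn)$ — and that $w_->1$: indeed $r>p_+$ gives $r/s>p_+/s$, hence $(p_+/s)'>(r/s)'$, so $w_-=(p_+/s)'/(r/s)'>1$. Consequently $\cm$ is bounded on $L^{w(\cdot)}(\rn)$, so $\|\cm(\phi^{(r/s)'})\|_{L^{w(\cdot)}(\rn)}\ls\|\phi^{(r/s)'}\|_{L^{w(\cdot)}(\rn)}=\|\phi\|_{L^{(p(\cdot)/s)'}(\rn)}^{(r/s)'}\le1$. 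Combining, taking the supremum over $\phi$ and then $s$-th roots gives $\|(\sum_j|\lz_j a_j|^s)^{1/s}\|_{\vp}\ls\kappa^{n(1-s/r)/s}\|(\sum_j|\lz_j|^s\chi_{B_j})^{1/s}\|_{\vp}$, and $n(1-s/r)/s=n(1/s-1/r)=n(1/\uz p-1/r)$, which is \eqref{eq-key} when $r<\fz$. For $r=\fz$ I would reduce to a finite exponent $r_0\in(p_+,\fz)$ via $\|a_j\|_{L^{r_0}(\rn)}\le|\kappa B_j|^{1/r_0}\|a_j\|_{L^\fz(\rn)}\le\kappa^{n/r_0}|B_j|^{1/r_0}$ and apply the finite case to $\kappa^{-n/r_0}a_j$.

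I expect the main obstacle to be precisely the boundedness of $\cm$ on $L^{w(\cdot)}(\rn)$ in the regime relevant here, namely $p_-\le1$: then $s=p_-$, the conjugate exponent $(p(\cdot)/s)'$ takes the value $\fz$ wherever $p(\cdot)$ approaches $p_-$, so $w_+=\fz$ and the form of the maximal inequality recorded in Lemma \ref{lem 1-3} (which assumes $p_+<\fz$) does not literally apply; one must instead invoke the version of the maximal inequality valid for variable exponents that are globally $\log$-H\"older with lower index strictly greater than $1$ but possibly unbounded (see \cite{cf13}). A further, more routine, point to handle carefully is the bookkeeping of the powers of $\kappa$ and $|B_j|$ and the verification that the scaled exponent $p(\cdot)/s$ and its conjugate are admissible — in particular that $L^{p(\cdot)/s}(\rn)$, whose lower index may equal $1$, still satisfies the norm conjugate formula.
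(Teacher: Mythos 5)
The paper offers no proof of this lemma to compare against: it is quoted from \cite[Lemma 2.4]{yz17}, itself a variant of \cite[Lemma 4.1]{s13}. Your argument is, in substance, the standard proof from those sources, and it checks out: rescaling by $s=\uz{p}$, testing $\sum_j|\lz_j a_j|^s$ against $\phi$ with $\|\phi\|_{L^{(p(\cdot)/s)'}(\rn)}\le 1$, applying H\"older on $\kappa B_j$ with exponents $r/s$ and $(r/s)'$, absorbing the enlarged ball into $\cm$ via $\frac{1}{|\kappa B_j|}\int_{\kappa B_j}\phi^{(r/s)'}\le \inf_{z\in B_j}\cm(\phi^{(r/s)'})(z)$, and then using the norm conjugate formula together with the boundedness of $\cm$ on $L^{w(\cdot)}(\rn)$, $w(\cdot)=(p(\cdot)/s)'/(r/s)'$. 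The bookkeeping is correct: $|B_j|^{s/r}|\kappa B_j|^{1-s/r}=\kappa^{n(1-s/r)}|B_j|$, $n(1-s/r)/s=n(1/\uz{p}-1/r)$, and $w_-=\frac{p_+(r-s)}{r(p_+-s)}>1$ exactly because $r>p_+$; the reduction of $r=\fz$ to a finite $r_0\in(p_+,\fz)$ also works. The caveat you flag is real but is not a gap: since $p(\cdot)$ may take values arbitrarily close to (or equal to) $s=p_-$, the exponent $(p(\cdot)/s)'$, hence $w(\cdot)$, may be unbounded, so Lemma \ref{lem 1-3} as stated in the paper does not literally apply; however, the maximal inequality for exponents with $1/w$ globally log-H\"older, $w_->1$ and possibly $w_+=\fz$ is available (this is how \cite[Theorem 4.3.8]{dhhr11} is actually formulated, and \cite{cf13} also treats unbounded exponents), and $1/w(\cdot)=\frac{r}{r-s}\lf[1-\frac{s}{p(\cdot)}\r]$ inherits global log-H\"older continuity from $1/p(\cdot)$ because $p_->0$. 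With that reference substituted, your proposal is a complete and correct proof, essentially identical in route to the one in the cited literature.
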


For more properties of the variable Lebesgue spaces $\vp$, we refer the
reader to \cite{cf13,dhhr11}.
\begin{remark}\label{rem-1}
Let $p(\cdot)\in\cp(\rn)$.
\begin{enumerate}
\item[(i)] For any $\lz\in\mathbb{C}$ and $f\in\vp$, $\|\lz f\|_{\vp}=|\lz|\|f\|_{\vp}$.
In particular, if $p_-\in[1,\,\fz)$, then $\|\cdot\|_{\vp}$ is a norm, namely,
for any $f,\,g\in\vp$,
$$\|f+g\|_{\vp}\le \|f\|_{\vp}+\|g\|_{\vp}.$$

\item[(ii)] By the definition of $\|\cdot\|_{\vp}$ (see \eqref{eq norm}),
it is easy to see that, for any $f\in\vp$ and $s\in(0,\,\fz)$,
\begin{equation*}
\lf\||f|^s\r\|_{\vp}=\|f\|_{L^{sp(\cdot)}(\rn)}^s.
\end{equation*}
\end{enumerate}
\end{remark}

\section{Proofs of Theorem \ref{thm-2} and Corollary \ref{cor-1}}\label{s3}
In this section, we give the proofs of Theorem \ref{thm-2} and Corollary \ref{cor-1}.
\subsection{Proof of Theorem \ref{thm-2}}\label{s3-1}
We first prove the following Propositions \ref{pro-2} and \ref{pro-3} below.
Then Theorem \ref{thm-2} is a direct consequence of them.

The following proposition shows that $\ath\st\vhp$.
\begin{proposition}\label{pro-2}
Let $L$ be as in \eqref{eq o} with $A$ satisfying Assumption \ref{as 1} and $r\in (1,\,\fz)$.
Assume that $p(\cdot)\in C^{\log}(\rn)$ with $\frac{n}{n+\mu_0}<p_-\le p_+\le 1$,
where $\mu_0\in(0,1]$ is as in \eqref{eq mu}.
Then there exists a positive constant $C$ such that,
for any $f\in\ath$,
$$\|f\|_{\vhp}\le C\|f\|_{\ath}.$$
\end{proposition}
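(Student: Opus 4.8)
The plan is to show that $\|f\|_{\vhp}=\|S_L(f)\|_{\vp}\ls \ca(\{\lz_j\},\{B_j\})$ for any atomic $(p(\cdot),r)$-representation $f=\sum_j\lz_j a_j$, and then take the infimum over all such representations. Since the sum converges in $L^2(\rn)$ and $S_L$ is sublinear and bounded on $L^2(\rn)$ (Remark \ref{rem-2}), we have the pointwise bound $S_L(f)\le\sum_j|\lz_j|S_L(a_j)$, so by the $\uz p$-subadditivity of $\|\cdot\|_{\vp}^{\uz p}$ (where $\uz p:=\min\{p_-,1\}=p_-$ here) it suffices to control $\|\{\sum_j|\lz_j|^{p_-}[S_L(a_j)]^{p_-}\}^{1/p_-}\|_{\vp}$. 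The standard device is to split, for each atom $a_j$ supported in $B_j=B(x_j,r_j)$, the cone integral according to the annuli $U_i(B_j)$, writing $S_L(a_j)\chi_{U_i(B_j)}$ and estimating the $L^2(U_i(B_j))$ norm of $S_L(a_j)$ with exponential/polynomial decay in $i$.

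The key analytic input will be an off-diagonal ("Gaffney-type") estimate: using the Gaussian bounds and H\"older-type kernel estimates of Lemma \ref{lem 1-2}, together with the size condition (ii) and, when $r_j<[m(x_j,V)]^{-1}$, the cancellation condition (iii), one derives for some $\dz>0$ (depending on $\mu_0$, and on the extra room $\mu_0+n/r>n/p_-$)
\begin{equation*}
\lf\|S_L(a_j)\r\|_{L^r(U_i(B_j))}\ls 2^{-i\dz}\,2^{in/r}\,|B_j|^{1/r}\,\|\chi_{B_j}\|_{\vp}^{-1}.
\end{equation*}
Here the cancellation is used exactly as in \cite{dz02,ccyy14}: when $r_j m(x_j,V)$ is small one exploits $\int a_j=0$ and the kernel smoothness to gain the factor $2^{-i\mu}$ for $\mu\in(0,\mu_0)$; when $r_j m(x_j,V)\gtrsim1$ one instead uses the polynomial decay $[1+\sqrt t\,m(x,V)]^{-N}$ in \eqref{eq Gauss1}, via Lemma \ref{lem aux}, to produce decay without cancellation. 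One must also separately handle the "close" part $t\gtrsim r_j$ of the cone, where no cancellation is available but $|U_i(B_j)|$-scaling and the $L^2\to L^2$ bound suffice. This reduces $S_L(a_j)\chi_{U_i(B_j)}$ to $2^{-i\dz}$ times (a constant multiple of) an $L^r$-normalized "molecular block" on $2^iB_j$.

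With this annular estimate in hand, the plan is to invoke Lemma \ref{lem-key} with $\kappa\sim 2^i$ to pass from the $a_j$-side to the $\chi_{B_j}$-side. Concretely, for each fixed $i$ set $b_{i,j}:=|B_j|^{-1/r}\|\chi_{B_j}\|_{\vp}\,2^{i(\dz-n/r)}S_L(a_j)\chi_{U_i(B_j)}$; then $\supp b_{i,j}\st 2^iB_j$ and $\|b_{i,j}\|_{L^r(\rn)}\ls |2^iB_j|^{1/r}$, so Lemma \ref{lem-key} (valid since $r>p_+$) gives
\begin{equation*}
\lf\|\lf(\sum_j|\lz_j|^{p_-}\lf[\tfrac{\|\chi_{B_j}\|_{\vp}}{|B_j|^{1/r}}2^{i(\dz-n/r)}S_L(a_j)\chi_{U_i(B_j)}\r]^{p_-}\r)^{1/p_-}\r\|_{\vp}\ls 2^{in(1/p_--1/r)}\,\ca(\{\lz_j\},\{B_j\}).
\end{equation*}
Undoing the normalization costs a factor $2^{i(n/r-\dz)}$, and summing over $i\in\zz_+$ yields a geometric series $\sum_i 2^{i(n/p_--\dz)}$, which converges precisely under the hypothesis $\mu_0+n/r>n/p_-$ (this is what forces $\dz$ to be chosen so that $\dz>n/p_--\,$nothing, i.e. one needs the gain $\dz$ to beat $n/p_-$; the $2^{in/r}$ from $|U_i(B_j)|$ and the $2^{-in/r}$ from undoing cancel, leaving $2^{-i\dz}2^{in/p_-}$). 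The main obstacle is therefore establishing the off-diagonal estimate with the sharp exponent $\dz$, in particular carefully tracking the two regimes governed by $r_j m(x_j,V)$ and verifying that the decay survives the $\|\chi_B\|_{\vp}$ vs. $|B|$ scaling interplay; once that is done the variable-exponent bookkeeping is routine via Lemma \ref{lem-key} and the $p_-$-triangle inequality. I expect to model the details on the proof of \cite[Theorem 2.3]{ccyy14}, adapted to the matrix coefficients via Lemma \ref{lem 1-2} and to the variable exponent via Lemmas \ref{lem fs} and \ref{lem-key}.
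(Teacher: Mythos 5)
Your plan is structurally the same as the paper's: bound $S_L(f)$ pointwise by $\sum_j|\lz_j|S_L(a_j)$, decompose dyadically over the annuli $U_i(B_j)$, prove an $L^r$ off-diagonal decay estimate for $S_L(a)$ on $U_i(B)$ that combines Gaussian decay, the polynomial factor $[1+\sqrt{t}\,m(\cdot,V)]^{-N}$ via Lemma \ref{lem aux} when $r_Bm(x_B,V)\gtrsim1$, and the cancellation $\int a=0$ with the H\"older-continuity bound of Lemma \ref{lem 1-2}(ii) otherwise, and then feed the result into Lemma \ref{lem-key} and sum a geometric series. That is exactly what the paper does, so there is no divergence in strategy.

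There is, however, a genuine bookkeeping error that matters: you claim the off-diagonal gain $\dz$ is capped near $\mu_0+n/r$, hence that the geometric series $\sum_i 2^{i(n/p_--\dz)}$ requires the hypothesis $\mu_0+\frac{n}{r}>\frac{n}{p_-}$. That hypothesis appears in Theorem \ref{thm-2} and Proposition \ref{pro-3}, but it is \emph{not} assumed in Proposition \ref{pro-2}, whose only size restriction is $p_->\frac{n}{n+\mu_0}$. And in fact the estimate one actually obtains (see \eqref{eq 2.2}--\eqref{eq 2.4}) is
\[
\|S_L(a)\|_{L^r(U_i(B))}\ls 2^{-i\tz}|B|^{1/r}\|\chi_B\|_{\vp}^{-1}
\]
with $\tz$ as close as one likes to $n\bigl(1-\tfrac1r\bigr)+\mu_0$ (take $\eta\to1$, $\mu\to\mu_0$, $N$ large). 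In your normalization $\dz=\tz+\tfrac{n}{r}$, this means $\dz$ approaches $n+\mu_0$, not $\mu_0+\tfrac{n}{r}$; you have dropped the full spatial Gaussian gain and only kept the $|U_i(B)|^{1/r}$ measure factor. With the correct cap $\dz<n+\mu_0$, the geometric series converges exactly under $n+\mu_0>\tfrac{n}{p_-}$, i.e., $p_->\tfrac{n}{n+\mu_0}$, which is precisely what Proposition \ref{pro-2} assumes. As written, your argument would only prove the result under the extra assumption $\mu_0+\frac{n}{r}>\frac{n}{p_-}$.

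A smaller point: your description of the time-regime split is turned around. The paper splits the cone integral at $t=2^{i\eta}r_B$ (not at $t\sim r_B$); it is the \emph{small-$t$} part that needs only Gaussian spatial decay (since then $|y-z|\gtrsim 2^ir_B$), while the \emph{large-$t$} part is the one requiring cancellation (case $r_Bm(x_B,V)<1$) or the polynomial factor from Lemma \ref{lem 1-2}(i) (case $r_Bm(x_B,V)\ge1$). Your remark that for $t\gtrsim r_j$ ``no cancellation is available'' and one falls back on the $L^2\to L^2$ bound does not match the actual computation; the $L^2$-boundedness (more precisely $L^r$-boundedness) of $S_L$ is used only for the finitely many small annuli $i\le10$. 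If you carry out the estimate with the split you describe, the small-$t$ piece will not decay. Also, Lemma \ref{lem fs} is not needed here; Lemma \ref{lem-key} alone suffices for Proposition \ref{pro-2}.
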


\begin{proof}
By a density argument, we only need to show that,
for any $f\in\atom$ (see Definition \ref{def at-Hardy}),
$\|f\|_{\vhp}\ls\|f\|_{\ath}$.

Let $f\in\atom$. Then, by \eqref{eq at-Hardy}, we know that
there exist $\{\lz_j\}_{j\in\nn}\st\mathbb{C}$
and a family $\{a_j\}_{j\in\nn}$ of $(p(\cdot),\,r)$-atoms, associated with balls $\{B_j\}_{j\in\nn}$
of $\rn$, such that
\begin{equation}\label{eq 3.0}
f=\sum_{j=1}^\fz \lz_j a_j\ \ \text{in}\ \ L^2(\rn)
\end{equation}
and
\begin{equation}\label{eq 3.0x}
\|f\|_{\ath}\sim\ca(\{\lz_j\}_{j\in\nn},\,\{B_j\}_{j\in\nn}).
\end{equation}
Next, we prove
\begin{align}\label{eq 2.x}
\|S_L(f)\|_{\vp}\ls\ca(\{\lz_j\}_{j\in\nn},\,\{B_j\}_{j\in\nn}).
\end{align}
Indeed, By \eqref{eq 3.0} and the fact that
$S_L$ is bounded on $L^2(\rn)$ (see Remark \ref{rem-2}), we find that
\begin{align*}
\lim_{N\to\fz}\lf\|S_L(f)-S_L\lf(\sum_{j=1}^N\lz_j a_j\r)\r\|_{L^2(\rn)}=0,
\end{align*}
which further implies that, for almost every $x\in\rn$,
\begin{align*}
S_L(f)(x)
&\le \sum_{j=1}^\fz|\lz_j|S_L(a_j)(x)\\
&=\sum_{j=1}^\fz\sum_{i=0}^\fz|\lz_j|S_L(a_j)(x)\chi_{U_i(B_j)}(x),
\end{align*}
where $U_i(B_j)$ is as in \eqref{eq-ujb} with $B$ therein replaced by $B_j$.
From this, Remark \ref{rem-1} and the fact that $p_-\in(0,\,1]$, it follows that
\begin{align}\label{eq 3.3}
\|S_L(f)\|_{\vp}^{p_-}
&=\lf\|[S_L(f)]^{p_-}\r\|_{L^{\frac{p(\cdot)}{p_-}}(\rn)}\noz\\
&\le \sum_{i=0}^\fz\lf\|\sum_{j=1}^\fz|\lz_j|^{p_-}
[S_L(m_j)\chi_{U_i(B_j)}]^{p_-}\r\|_{L^{\frac{p(\cdot)}{p_-}}(\rn)}\noz\\
&= \sum_{i=0}^\fz\lf\|\lf\{\sum_{j=1}^\fz|\lz_j|^{p_-}
[S_L(m_j)\chi_{U_i(B_j)}]^{p_-}\r\}^{\frac{1}{p_-}}\r\|^{p_-}_{\vp}.
\end{align}
To prove \eqref{eq 2.x}, we claim that
it is enough to show that
there exist constants $C\in(0,\,\fz)$
and $\theta\in(n[\frac{1}{p_-}-\frac1r],\fz)$ such that, for any $(p(\cdot),\,r)$-atom $a$,
associated with ball $B:=B(x_B,r_B)$ of $\rn$, and any $i\in\zz_+$,
\begin{equation}\label{eq 3.2}
\|S_L(a)\|_{L^r(U_i(B))}\le C 2^{-i\theta}|B|^{\frac1r}\|\chi_B\|^{-1}_{\vp}.
\end{equation}
Indeed, if \eqref{eq 3.2} holds true, then, for any $i\in\zz_+$ and $j\in\nn$,
\begin{align*}
\lf\|2^{i\tz}\lf\|\chi_{B_j}\r\|_{\vp}S_L(a_j)\chi_{U_i(B_j)}\r\|_{L^r(\rn)}
\ls |B_j|^{1/r}.
\end{align*}
By this and Lemma \ref{lem-key}, we know that
\begin{align*}
&\lf\|\lf\{\sum_{j=1}^\fz \lf[|\lz_j|S_L(m_j)\chi_{U_i(B_j)}\r]^{p_-}\r\}^{\frac{1}{p_-}}\r\|_{\vp}\\
&\hs\ls2^{in(\frac1{p_-}-\frac1r)}\lf\|\lf\{\sum_{j=1}^\fz
\lf[2^{-i\tz}\|\chi_{B_j}\|_{\vp}^{-1}|\lz_j|\chi_{B_j}\r]^{p_-}\r\}^
{\frac{1}{p_-}}\r\|_{\vp}\noz\\
&\hs\ls2^{-i[\tz-n(\frac1{p_-}-\frac1r)]}
\mathcal{A}(\{\lz_j\}_{j\in\nn},\,\{B_j\}_{j\in\nn}).\noz
\end{align*}
Combining this, \eqref{eq 3.3} and the fact that
$\tz>n(\frac1{p_-}-\frac1r)$, we further conclude that, for any $f\in\atom$,
\begin{align*}
\|S_L(f)\|_{\vp}
&\ls\lf\{\sum_{i=0}^\fz 2^{-i[\tz-n(\frac1{p_-}-\frac1r)]p_-}\r\}^{\frac1{p_-}}
\mathcal{A}(\{\lz_j\}_{j\in\nn},\,\{B_j\}_{j\in\nn})\\
&\ls\|f\|_{\ath},
\end{align*}
which is the desired result.

Next, we prove \eqref{eq 3.2}.
When $i\in\{0,\ldots,10\}$, by the boundedness of
$S_L$ on $L^r(\rn)$ (see Remark \ref{rem-2}), we obtain
\begin{align*}
\lf\|S_L(a)\r\|_{L^r(U_i(B))}\ls\|a\|_{L^r(\rn)}\ls|B|^{\frac1r}\lf\|\chi_B\r\|^{-1}_{\vp}.
\end{align*}
When $i\geq 11$ and $i\in\zz_+$, for any given $\eta\in(0,1)$,
we have
\begin{align*}
\lf\|S_L(a)\r\|_{L^r(U_i(B))}
&=\lf\{\int_{U_i(B)}\lf[\int_0^\fz\int_{B(x,\,t)}\lf|t^2Le^{-t^2L}(a)(y)\r|^2\,
\dydt\r]^{\frac r2}\,dx\r\}^{\frac1r}\\
&\ls\lf\{\int_{U_i(B)}\lf[\int_0^{2^{i\eta} r_B}\int_{B(x,\,t)}\lf|t^2Le^{-t^2L}(a)(y)\r|^2\,
\dydt\r]^{\frac r2}\,dx\r\}^{\frac1r}\\
&\hs+\lf\{\int_{U_i(B)}\lf[\int_{2^{i\eta} r_B}^\fz\cdots\r]^{\frac r2}\,dx\r\}^{\frac1r}\\
&=:{\rm I}+{\rm II}.
\end{align*}
For ${\rm I}$, by the fact that
$-Le^{-tL}(a)(y)=\frac{\partial}{\partial t}e^{-tL}(a)(y)
=\int_\rn\frac{\partial}{\partial t}K_t(y,z)\,a(z)\,dz$ and Lemma \ref{lem 1-2}(i),
we find that
\begin{align}\label{eq 2.0}
\lf|t^2L e^{-t^2L}(a)(y)\r|
\ls\int_\rn\frac{1}{t^n}e^{-c\frac{|y-z|^2}{t^2}}|a(z)|\,dz.
\end{align}
Moreover, from fact that $z\in B(x_B,r_B)$,
$y\in B(x,\,t)$, $x\in U_i(B)$ and $t\in(0,2^{i\eta}r_B)$, it follows that
\begin{align*}
|y-z|\geq |x-x_B|-|y-x|-|z-x_B|
\geq 2^{i-1}r_B-2^{i\eta}r_B-r_B
\gtrsim 2^ir_B.
\end{align*}
This, combined with \eqref{eq 2.0}, the H\"{o}lder inequality and
$a$ is an $(p(\cdot),\,r)$-atom associated with ball $B(x_B,r_B)$, implies that
\begin{align*}
\lf|t^2L e^{-t^2L}(a)(y)\r|
\ls\frac{1}{t^n}e^{-c\frac{(2^{i}r_B)^2}{t^2}}
\|a\|_{L^r(B)}|B|^{1-1/r}
\ls\frac{1}{t^n}e^{-c\frac{(2^{i}r_B)^2}{t^2}}|B|\|\chi_B\|_{\vp}^{-1}.
\end{align*}
Hence, we obtain
\begin{align}\label{eq 2.2}
{\rm I}
&\ls\lf\{\int_{U_i(B)}\lf[\int_0^{2^{i\eta} r_B}\int_{B(x,\,t)}
\frac{1}{t^{2n}}e^{-c\frac{(2^{i}r_B)^2}{t^2}}|B|^2\|\chi_B\|_{\vp}^{-2}\,
\dydt\r]^{\frac r2}\,dx\r\}^{\frac1r}\noz\\
&\ls\lf\{\int_{U_i(B)}\lf[\int_0^{2^{i\eta} r_B}
\frac{1}{t^{2n+1}}\lf(\frac{t}{2^ir_B}\r)^{2N}|B|^2\|\chi_B\|_{\vp}^{-2}\,
dt\r]^{\frac r2}\,dx\r\}^{\frac1r}\noz\\
&\ls\lf[\int_{U_i(B)}2^{-irN}2^{i\eta r(N-n)}r_B^{-rn}\,dx\r]^{\frac1r}
|B|\|\chi_B\|^{-1}_{\vp}\noz\\
&\ls 2^{-i[N(1-\eta)+n(\eta-1/r)]}|B|^{1/r}\|\chi_B\|_{\vp}^{-1}.
\end{align}

To estimate ${\rm II}$,
we consider two cases. In what follows, we denote the kernel of $tLe^{-tL}$
by $V_t(\cdot,\cdot)$, namely, $V_t(\cdot,\cdot):=t\frac{\partial}{\partial t}K_t(\cdot,\cdot)$.

Case i): $r_B<[m(x_B,V)]^{-1}$. In this case,
by Definition \ref{def atom}(iii), we know that $\int_\rn a(x)\,dx=0$.
By this, Lemma \ref{lem 1-2}(ii),
Definition \ref{def atom} and the H\"{o}lder inequality,
we find that, for any $y\in B(x,\,t)$,
\begin{align*}
\lf|t^2Le^{-t^2L}(a)(y)\r|
&=\lf|\int_\rn V_{t^2}(y,z)a(z)\,dz\r|\\
&=\lf|\int_\rn\lf[V_{t^2}(y,z)-V_{t^2}(y,x_B)\r]a(z)\,dz\r|\\
&\ls\int_\rn\frac{1}{t^n}\lf(\frac{|z-x_B|}{t}\r)^\mu
e^{-c\frac{|y-x_B|^2}{t^2}}|a(z)|\,dz\\
&\ls\frac{r_B^\mu}{t^{n+\mu}}\|\chi_B\|^{-1}_{\vp}|B|,
\end{align*}
where $\mu\in(0,\,\mu_0)$ is determined later
and $\mu_0\in(0,\,1]$ is as in \eqref{eq mu}.
Therefore, we have
\begin{align}\label{eq 2.3}
{\rm II}
&\ls\lf\{\int_{U_i(B)}\lf[\int_{2^{i\eta}r_B}^\fz\int_{B(x,\,t)}
\frac{r_B^{2\mu}}{t^{2(n+\mu)}}|B|^2\|\chi_B\|_{\vp}^{-2}\,\dydt\r]^{r/2}
\,dx\r\}^{1/r}\noz\\
&\ls\lf\{\int_{U_i(B)}\lf[\int_{2^{i\eta}r_B}^\fz
\frac{r_B^{2\mu}}{t^{2(n+\mu)+1}}\,dt\r]^{r/2}\,dx\r\}^{1/r}|B|\|\chi_B\|^{-1}_{\vp}\noz\\
&\ls\lf\{\int_{U_i(B)}2^{-iq\eta(n+\mu)}r_B^{-rn}\,dx\r\}^{1/r}|B|\|\chi_B\|^{-1}_{\vp}\noz\\
&\ls 2^{-i[(\eta-1/r)n+\eta\mu]}|B|^{1/r}\|\chi_B\|_{\vp}^{-1}.
\end{align}

Case ii): $r_B\geq [m(x_B,V)]^{-1}$.
In this case, by Lemma \ref{lem 1-2}(i), we find that, for any $y\in B(x,\,t)$,
\begin{align}\label{eq 2.1}
\lf|t^2Le^{-t^2L}(a)(y)\r|
&=\lf|\int_\rn V_{t^2}(y,z)a(z)\,dz\r|\noz\\
&\ls\int_\rn\frac{1}{t^n}e^{-c\frac{|y-z|^2}{t^2}}\lf[1+tm(y,\,V)+tm(z,V)\r]^{-N}|a(z)|\,dz,
\end{align}
where $N\in(0,\,\fz)$ is determined later.
Moreover, by the fact that $y\in B(x,\,t)$, $t\in(2^{i\eta}r_B,\fz)$,
$z\in B(x_B,r_B)$, $r_B m(x_B,V)\geq 1$ and Lemma \ref{lem aux}(iii),
we know that there exists a positive constant $k_0$ such that
\begin{align*}
tm(z,V)
&\gtrsim tm(x_B,V)[1+|z-x_B|m(x_B,V)]^{-\frac{k_0}{k_0+1}}\\
&\gtrsim 2^{i\eta}r_Bm(x_B,V)[1+r_Bm(x_B,V)]^{-\frac{k_0}{k_0+1}}
\gtrsim 2^{i\eta}[r_Bm(x_B,V)]^{\frac1{k_0+1}}
\gtrsim 2^{i\eta}.
\end{align*}
From this, \eqref{eq 2.1}, the H\"{o}lder inequality
and Definition \ref{def atom}(ii), we deduce that
\begin{align*}
\lf|t^2Le^{-t^2L}(a)(y)\r|
\ls\int_\rn 2^{-i\eta N}\frac{1}{t^n}|a(z)|\,dz
\ls 2^{-i\eta N}\frac{1}{t^n}|B|\|\chi_B\|_{\vp}^{-1}.
\end{align*}
This further implies that
\begin{align}\label{eq 2.4}
{\rm II}
&\ls\lf\{\int_{U_i(B)}\lf[\int_{2^{i\eta}r_B}^\fz\int_{B(x,\,t)}
\frac{2^{-2i\eta N}}{t^{2n}}|B|^2\|\chi_B\|_{\vp}^{-2}\,\dydt\r]^{r/2}
\,dx\r\}^{1/r}\noz\\
&\ls\lf\{\int_{U_i(B)}\lf[\int_{2^{i\eta}r_B}^\fz
\frac{2^{-2i\eta N}}{t^{2n+1}}\,dt\r]^{r/2}\,dx\r\}^{1/r}|B|\|\chi_B\|^{-1}_{\vp}\noz\\
&\ls 2^{-i[\eta(N+n)-n/r]}|B|^{1/r}\|\chi_B\|_{\vp}^{-1}.
\end{align}

Let $\theta:=\min\{N(1-\eta)+n(\eta-1/r), (\eta-1/r)n+\eta\mu, \eta(N+n)-n/r\}$.
Then, by choosing $N$ large enough, $\eta$ close enough to $1$, $\mu$ close enough to $\mu_0$
and the fact that $p_->\frac{n}{n+\mu_0}$, we know that
$\theta\in(n[\frac{1}{p_-}-\frac1r],\fz)$.
This, combined with \eqref{eq 2.2}, \eqref{eq 2.3} and \eqref{eq 2.4},
implies \eqref{eq 3.2}, which completes the proof of
Proposition \ref{pro-2}.
\end{proof}

Next, by means of the atomic decomposition of variable tent space,
we show that $\vhp\st\ath$.

The following variable tent space is first
introduced by Zhuo et al. \cite[p.\,1567]{zyl14}.
\begin{definition}[\cite{zyl14}]\label{def tent}
Let $p(\cdot)\in\cp(\rn)$. The \emph{variable tent space $T^{p(\cdot)}(\rnn)$}
is defined to be the space of all measurable functions $f$ such that
$\|f\|_{T^{p(\cdot)}(\rnn)}:=\|A(f)\|_{\vp}<\fz$,
where, for any $x\in\rn$,
\begin{equation*}
A(f)(x):=\lf[\int_0^\fz\int_{B(x,\,t)}|f(y,\,t)|^2\dydt\r]^{1/2}.
\end{equation*}
\end{definition}

\begin{remark}\label{rem-4}
In particular, if $p(\cdot)\equiv p\in(0,\,\fz)$ is a constant exponent,
$T^{p(\cdot)}(\rnn)$ is just the tent space $T^{p}(\rnn)$ introduced by
Coifman et al. \cite{cms85}. Moreover, by \cite[Theorem 2]{cms85},
we know that if $p\in(1,\,\fz)$, then, for any $f\in T^p(\rnn)$ and $g\in T^{p'}(\rnn)$,
the pairing
\begin{align*}
\langle f,\,g\rangle:=\int_0^\fz\int_{\cx} f(x,\,t)\ov{g(x,\,t)}\,d\mu(x)\,\frac{dt}{t}
\end{align*}
realizes $T^{p'}(\rnn)$ as the dual of $T^p(\rnn)$, up to equivalent norms,
where $1/p+1/p'=1$.
\end{remark}

For any open set $O\st\rn$, the \emph{tent} over $O$  is defined by setting
\begin{align*}
\wh{O}:=\lf\{(y,\,t)\in\rnn:\ \dist \lf(y,\,O^\complement\r)\geq t\r\}.
\end{align*}

Let $p(\cdot)\in\cp(\rn)$. Recall that a measurable
function $A$ on $\rnn$ is called a \emph{$(p(\cdot),\,\fz)_T$-atom} if there
exists a ball $B\st\rn$ such that
\begin{enumerate}
\item[(i)] $\supp A\st\wh{B}$;

\item[(ii)] for any $r\in(1,\,\fz)$,
$\|A\|_{T^r(\rnn)}\le |B|^{1/r}\|\chi_B\|_{\vp}^{-1}.$
\end{enumerate}

The following establishes the atomic decomposition of $T^{p(\cdot)}(\rnn)$,
which is just \cite[Lemma 3.3]{yzz15} (see also \cite[Theorem 2.16]{zyl14}).
\begin{lemma}[\cite{yzz15}]\label{lem 2.0}
Let $p(\cdot)\in C^{\log}(\rn)$ with $p_+\in(0,\,1]$. Then, for any $f\in T^{p(\cdot)}(\rnn)$,
there exist $\{\lz_j\}_{j\in\nn}\st\cc$ and a family $\{A_j\}_{j\in\nn}$ of $(p(\cdot),\,\fz)_T$-atoms
such that, for almost every $(x,\,t)\in\rnn$,
\begin{equation*}
f(x,\,t)=\sum_{j\in\nn}\lz_j A_j(x,\,t)
\end{equation*}
and
$$\ca(\{\lz_j\}_{j\in\nn},\,\{B_j\}_{j\in\nn})
\le C\|f\|_{T^{p(\cdot)}(\rnn)},$$
where, for any $j\in\nn$, $B_j$ is the ball associated with $a_j$
and $C$ a positive constant independent of $f$.
\end{lemma}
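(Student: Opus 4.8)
The plan is to adapt the classical Coifman--Meyer--Stein atomic decomposition of the tent space $T^p(\rnn)$ (from \cite{cms85}) to the variable-exponent setting, following the scheme of \cite{zyl14,yzz15}. Fix $f\in T^{p(\cdot)}(\rnn)$ and, for each $k\in\zz$, set
\begin{equation*}
O_k:=\lf\{x\in\rn:\ A(f)(x)>2^k\r\},
\end{equation*}
which is open since $A(f)$ is lower semicontinuous. Since $\|A(f)\|_{\vp}<\fz$, each $O_k$ has finite measure and $\bigcap_{k\in\zz}O_k=\emptyset$ up to a null set. The key geometric fact (already in \cite{cms85}) is that, up to a null set, $\supp f\st\bigcup_{k\in\zz}\big(\wh{O_k}\setminus\wh{O_{k+1}}\big)$, so that $f=\sum_{k\in\zz}f\chi_{\wh{O_k}\setminus\wh{O_{k+1}}}$. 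For each $k$ one performs a Whitney decomposition of $O_k$ into a family of balls $\{B_{k,i}\}_{i}$ with bounded overlap and radii comparable to their distance from $O_k^\complement$, and writes $f\chi_{\wh{O_k}\setminus\wh{O_{k+1}}}=\sum_i f\chi_{T_{k,i}}$ where $T_{k,i}$ is the portion of $\wh{O_k}\setminus\wh{O_{k+1}}$ attached to $B_{k,i}$; each such piece is supported in a fixed dilate $c\,\wh{B_{k,i}}$.

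The next step is to define, for each pair $(k,i)$, the normalizing coefficient
\begin{equation*}
\lz_{k,i}:=c\,2^k|B_{k,i}|^{1/r}\lf\|\chi_{B_{k,i}}\r\|_{\vp}\,/\,|B_{k,i}|^{1/r}
= c\,2^k\lf\|\chi_{B_{k,i}}\r\|_{\vp},
\end{equation*}
so that $A_{k,i}:=\lz_{k,i}^{-1}f\chi_{T_{k,i}}$ is a $(p(\cdot),\,\fz)_T$-atom: the support condition is immediate, and the size condition $\|A_{k,i}\|_{T^r(\rnn)}\le|B_{k,i}|^{1/r}\|\chi_{B_{k,i}}\|_{\vp}^{-1}$ for every $r\in(1,\fz)$ follows from the $T^2$-to-$T^\fz$-type duality estimate on the Whitney pieces (this is where the structure $f\chi_{\wh{O_k}\setminus\wh{O_{k+1}}}$, i.e. the pointwise bound on $A(f)$ off $O_{k+1}$, together with the finite overlap of the Whitney cubes, is used exactly as in \cite[Lemma~2.16]{zyl14}). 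This gives the pointwise decomposition $f=\sum_{k,i}\lz_{k,i}A_{k,i}$ a.e.

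It remains to control $\ca(\{\lz_{k,i}\},\{B_{k,i}\})$ by $\|f\|_{T^{p(\cdot)}(\rnn)}=\|A(f)\|_{\vp}$. Using $\uz{p}:=\min\{p_-,1\}=p_-\wedge1$ and the bounded overlap of the Whitney balls within each level $k$, one estimates
\begin{equation*}
\sum_{k,i}\lf[\frac{|\lz_{k,i}|\chi_{B_{k,i}}}{\|\chi_{B_{k,i}}\|_{\vp}}\r]^{\uz p}
\ls\sum_{k\in\zz}2^{k\uz p}\chi_{O_k}
\sim\sum_{k\in\zz}2^{k\uz p}\chi_{\{2^k<A(f)\le2^{k+1}\}}\cdot(\text{telescoping})
\ls[A(f)]^{\uz p},
\end{equation*}
so that $\ca(\{\lz_{k,i}\},\{B_{k,i}\})\ls\|[A(f)]^{\uz p}\|_{L^{p(\cdot)/\uz p}}^{1/\uz p}=\|A(f)\|_{\vp}$, using Remark \ref{rem-1}(ii). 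The main obstacle is the last step: in the variable-exponent setting one cannot simply integrate, because $\chi_{O_k}$ is not comparable to $\chi_{\{2^k<A(f)\le2^{k+1}\}}$ pointwise (the level sets are nested, not disjoint). One resolves this exactly as in \cite[Theorem~2.16]{zyl14}: split the sum over $k$ according to whether $2^k$ is above or below a fixed threshold determined by the point $x$, use that on $O_k\setminus O_{k+1}$ one has $A(f)\sim2^k$, and invoke the $\log$-H\"older continuity of $p(\cdot)$ to handle the geometric series in the Luxemburg norm. The bounded overlap of the Whitney decomposition and the finite-measure property of the $O_k$ are what make this estimate close. All remaining points are routine and identical to \cite{cms85,zyl14,yzz15}.
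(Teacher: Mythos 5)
Your plan is the same Coifman--Meyer--Stein scheme, adapted to variable exponents, that underlies the paper's cited references \cite[Lemma~3.3]{yzz15} and \cite[Theorem~2.16]{zyl14}; the paper itself gives no proof of this lemma, it simply cites those results, so your sketch supplies more detail than the paper does and goes in the right direction.

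Two points in your sketch are imprecise enough to be worth flagging. First, the Whitney decomposition must be performed over the density-enlarged sets $O_k^*:=\{x\in\rn:\ \cm(\chi_{O_k})(x)>1-\gamma\}$ (for a suitable $\gamma\in(0,1)$), not over $O_k$ itself: without this enlargement the tents $\wh{O_k}\setminus\wh{O_{k+1}}$ need not exhaust $\supp f$, and, more importantly, the uniform $T^r$ size bound $\|A_{k,i}\|_{T^r(\rnn)}\le|B_{k,i}|^{1/r}\|\chi_{B_{k,i}}\|_{\vp}^{-1}$ for all $r\in(1,\fz)$ fails at the boundary of the Whitney pieces. Second, once $O_k$ is replaced by $O_k^*$, your pointwise bound $\sum_k 2^{kp_-}\chi_{O_k^*}\ls[A(f)]^{p_-}$ no longer holds pointwise; one must instead bound $\chi_{O_k^*}\ls[\cm(\chi_{O_k})]^{s}$ for suitable $s$ and then appeal to the Fefferman--Stein vector-valued inequality in $\vp$ (Lemma \ref{lem fs}), which is exactly where $\log$-H\"older continuity enters. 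The ``split the sum at a threshold and telescope'' step you describe as the main obstacle is actually trivial: for fixed $x$, $\sum_{k}2^{kp_-}\chi_{O_k}(x)$ is a plain geometric series $\sum_{k<\log_2 A(f)(x)}2^{kp_-}\ls[A(f)(x)]^{p_-}$, and no disjointness of level sets is needed. With these corrections the argument closes, and it matches the proof in the cited sources.
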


By Lemma \ref{lem 2.0}, we obtain the following proposition.
\begin{proposition}\label{pro-3}
Let $L$ be as in \eqref{eq o} with $A$ satisfying Assumption \ref{as 1},
$\mu_0\in(0,\,1]$ as in \eqref{eq mu},
$p(\cdot)\in C^{\log}(\rn)$ with $\frac{n}{n+\mu_0}<p_-\le p_+\le 1$
and $r\in(1,\,\fz)$ such that $\mu_0+\frac{n}{r}>\frac{n}{p_-}$.
Then there exists a positive constant $C$ such that,
for any $f\in H_L^{p(\cdot)}(\rn)$,
\begin{equation}\label{eq 2.7z}
\|f\|_{\ath}\le C\|f\|_{H_L^{p(\cdot)}(\rn)}.
\end{equation}
\end{proposition}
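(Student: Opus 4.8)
The plan is to prove Proposition \ref{pro-3} by passing through the variable tent space $T^{p(\cdot)}(\rnn)$. The key observation is that the map $\pi_L$, defined on nice functions $F$ on $\rnn$ by
$$\pi_L(F):=\int_0^\fz t^2Le^{-t^2L}\lf(F(\cdot,\,t)\r)\dydt$$
(or a suitable variant with the appropriate normalization dictated by the Calder\'on reproducing formula for $L$), sends $(p(\cdot),\,\fz)_T$-atoms to (fixed multiples of) $(p(\cdot),\,r)$-atoms of Definition \ref{def atom}, up to a harmless constant depending only on the structural constants. First I would recall, via the holomorphic functional calculus of the nonnegative self-adjoint operator $L$ (as in \cite{ccyy14}), that there is a constant $c_L$ such that for any $f\in L^2(\rn)$ with $\|S_L(f)\|_{\vp}<\fz$ one has $f=c_L\,\pi_L\lf((t^2Le^{-t^2L}f)(\cdot,\,t)\r)$ in $L^2(\rn)$, and that $F:=(t^2Le^{-t^2L}f)(\cdot,\,t)\in T^{p(\cdot)}(\rnn)$ with $\|F\|_{T^{p(\cdot)}(\rnn)}=\|S_L(f)\|_{\vp}=\|f\|_{\vhp}$.

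Next I would apply Lemma \ref{lem 2.0} to decompose $F=\sum_{j\in\nn}\lz_j A_j$, where each $A_j$ is a $(p(\cdot),\,\fz)_T$-atom supported in $\wh{B_j}$ and $\ca(\{\lz_j\}_{j\in\nn},\,\{B_j\}_{j\in\nn})\ls\|F\|_{T^{p(\cdot)}(\rnn)}=\|f\|_{\vhp}$. Since $\pi_L$ is bounded on $L^2$ against $T^2(\rnn)$, applying $\pi_L$ term-by-term gives $f=c_L\sum_{j\in\nn}\lz_j\,\pi_L(A_j)$ in $L^2(\rn)$. The heart of the argument is then to show that $a_j:=C^{-1}c_L\,\pi_L(A_j)$ is a $(p(\cdot),\,r)$-atom (possibly associated with a fixed dilate $\kappa B_j$ of $B_j$, which is absorbed by Lemma \ref{lem-key}). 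The support condition (i) is immediate from the support of $A_j$ in $\wh{B_j}$ together with the finite speed of propagation of $\cos(t\sqrt{L})$ (hence of $e^{-t^2L}$ through the subordination formula, as used in \cite{hlmmy11}), which keeps $\supp\pi_L(A_j)$ inside a fixed dilate of $B_j$. For the size condition (ii), I would exploit the $T^2$–$L^2$ boundedness together with duality: for $g\in L^{r'}(B_j)$, $\langle\pi_L(A_j),g\rangle=\langle A_j, (t^2Le^{-t^2L})^*g\rangle$, and since $A_j$ is an $L^2$-normalized tent atom with $\|A_j\|_{T^2(\rnn)}\le|B_j|^{1/2}\|\chi_{B_j}\|_{\vp}^{-1}$, combined with $\|(t^2Le^{-t^2L})^*g\|_{T^2(\rnn)}\ls\|g\|_{L^2(\rn)}$ and the Gaussian bounds of Lemma \ref{lem 1-2}(i), one obtains $\|\pi_L(A_j)\|_{L^r(\kappa B_j)}\ls|B_j|^{1/r}\|\chi_{B_j}\|_{\vp}^{-1}$. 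The restriction $\mu_0+n/r>n/p_-$ is exactly what is needed here to interpolate the off-diagonal Gaussian decay against the $L^r$ target; this is where the hypothesis on $r$ is used.

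The most delicate point — and the main obstacle — is verifying condition (iii): the ``local'' vanishing moment $\int_\rn\pi_L(A_j)\,dx=0$ whenever the radius $r_{B_j}<[m(x_{B_j},V)]^{-1}$. When $r_{B_j}$ is small relative to $[m(x_{B_j},V)]^{-1}$, the potential $V$ is negligible on the relevant scale, so $e^{-t^2L}$ behaves like the divergence-form heat semigroup $e^{-t^2L_0}$ (with $L_0=-\div(A\nabla)$), and $L_0$ annihilates constants, which forces $\int_\rn L_0 e^{-t^2L_0}(\cdot)=0$; quantifying this requires the subtle estimate \eqref{eq 2.16} alluded to in the introduction, controlling $\int_\rn\lf[t^2Le^{-t^2L}-t^2L_0e^{-t^2L_0}\r](h)\,dx$ by a power of $t\,m(x_{B_j},V)$ that is summable over the dyadic annuli in $t$ up to scale $r_{B_j}$. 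I would isolate this as a separate lemma (the analogue of \cite[Lemma 2.9]{ccyy14} or its variant in \cite{y14}), using the perturbation formula $e^{-tL}-e^{-tL_0}=-\int_0^t e^{-(t-s)L}V e^{-sL_0}\,ds$ together with the reverse H\"older bound $V\in RH_q(\rn)$ to estimate the error. With this lemma in hand, the cancellation of the tent atom $A_j$ over $\wh{B_j}$ combined with the annihilation property above yields (iii). Finally, assembling $f=\sum_j(C^{-1}c_L\lz_j) a_j\cdot C c_L^{-1}$, i.e. $f=\sum_j\wz\lz_j a_j$ with $\wz\lz_j:=Cc_L^{-1}\lz_j$, gives an atomic $(p(\cdot),\,r)$-representation with $\ca(\{\wz\lz_j\}_{j\in\nn},\{B_j\}_{j\in\nn})\ls\|f\|_{\vhp}$, and taking the infimum over all such representations yields \eqref{eq 2.7z} on the dense class; a density argument then extends it to all of $H_L^{p(\cdot)}(\rn)$.
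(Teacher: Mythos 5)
Your overall strategy — tent-space decomposition, push forward through a synthesis operator $\pi_L$, then verify the three atom conditions — matches the paper's. But two of the three key steps, as you describe them, would break down.

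First, and most seriously: you define $\pi_L$ with kernel $t^2Le^{-t^2L}$ and then claim $\supp\pi_L(A_j)\subset\kappa B_j$ by ``finite speed of propagation of $\cos(t\sqrt{L})$, hence of $e^{-t^2L}$ through subordination.'' This does not follow. The subordination formula writes $e^{-t^2L}$ as an \emph{integral over all} $s$ of $\cos(s\sqrt{L})$; it gives Gaussian \emph{decay} but not compact support, so $\pi_L(A_j)$ has unbounded support and condition (i) fails outright. The paper avoids this by synthesizing with $t^2L\,\Phi(t\sqrt{L})$ rather than $t^2Le^{-t^2L}$, where $\Phi$ is the Fourier transform of an odd $\phi\in C^\infty_c(-1,1)$. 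Because $\phi$ has compact support, $\Phi(t\sqrt{L})$ genuinely has finite propagation speed (this is the content of \cite[Lemma 3.5]{hlmmy11} as invoked in the paper), and that is precisely what yields $\supp\pi_L(A)\subset 2B$. The reproducing formula $f=c_{(\Phi)}\pi_L\bigl(t^2Le^{-t^2L}f\bigr)$ still holds via holomorphic functional calculus, so nothing is lost. Your version of $\pi_L$ cannot be repaired by a decay argument here, since Definition \ref{def atom}(i) demands genuine compact support.

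Second, your handling of the vanishing condition (iii) assumes $\int\pi_L(A_j)\,dx=0$ when $r_{B_j}<[m(x_{B_j},V)]^{-1}$. That is false in general: the estimate \eqref{eq 2.16} only shows that this mean is \emph{small}, of order $2^{-i_0\delta}|B_j|\,\|\chi_{B_j}\|_{\vp}^{-1}$ where $2^{i_0}r_{B_j}\sim[m(x_{B_j},V)]^{-1}$. The paper must therefore split $a=\mathrm{I}+\mathrm{II}+\mathrm{III}$ as in \eqref{eq 2.9x}: $\mathrm{I}$ has exact zero mean on $2B$ and inherits the size bound; $\mathrm{II}$ and $\mathrm{III}$ carry the small nonzero mean but are supported on the larger ball $2^{k_j+1}B_j$ whose radius exceeds $[m(x_{B_j},V)]^{-1}$, so no moment condition is required for them. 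Their coefficients pick up the factor $2^{-k_j\delta}$, and the condition $\mu_0+n/r>n/p_-$ is then needed to make the resulting double sum over $j$ and $k$ convergent in the quasi-norm (it is not used in the size estimate (ii), which follows simply from $T^r\to L^r$ boundedness of $\pi_L$). Without this three-term decomposition you cannot close the argument: you would need the moment to vanish exactly, and it does not.

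In short: the skeleton is right, but you need (a) the compactly-supported synthesizer $\Phi(t\sqrt{L})$ in place of $e^{-t^2L}$ to get condition (i), and (b) the $\mathrm{I}+\mathrm{II}+\mathrm{III}$ redistribution of the small mean to handle condition (iii), with the summability of the $\lambda_{j,k_j}$'s supplying the place where the hypothesis $\mu_0+\tfrac nr>\tfrac n{p_-}$ actually enters.
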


\begin{proof}
By a density argument,
we only need to show \eqref{eq 2.7z} holds true for any $f\in H_L^{p(\cdot)}(\rn)\cap L^2(\rn)$.
Let $f\in H_L^{p(\cdot)}(\rn)\cap L^2(\rn)$.
Then, for any $x\in\rn$ and $t\in (0,\,\fz)$, define
$F(x,t):=t^2Le^{-t^2L}(f)(x)$.
Since $t^2Le^{-t^2L}$ is bounded on $L^2(\rn)$,
we conclude that $F\in T^{p(\cdot)}(\rr^{n+1}_+)\cap T^2(\rnn)$.
Then, by Lemma \ref{lem 2.0}, we further find that
there exist $\{\lz_j\}_{j\in\nn}\st\cc$ and
a family $\{A_j\}_{j\in\nn}$ of $(p(\cdot),\,\fz)$-atoms,
associated with balls $\{B_j\}_{j\in\nn}$, such that
\begin{align}\label{eq 2.7}
t^2Le^{-t^2L}(f)(x)=\sum_{j=1}^\fz\lz_jA_j\quad\text{in}\quad T^{p(\cdot)}(\rnn)\cap T^2(\rnn)
\end{align}
and
\begin{align}\label{eq 2.7x}
\ca(\{\lz_j\}_{j\in\nn},\,\{B_j\}_{j\in\nn})
\sim \|F\|_{T^{p(\cdot)}(\rnn)}\sim\|f\|_{H_L^{p(\cdot)}(\rn)}.
\end{align}

Let $\phi\in C_c^\fz(\rr)$ be an odd function which satisfies
$\supp\phi\st(-1,1)$ and $\int_\rr\phi(x)\,dx=0$,
where $C_c^\fz(\rr)$ denotes the set of all finitely differential function $f$
on $\rr$ with compact support.
For any $z\in\cc$, define
$
\Phi(z):=\int_\rr e^{-2\pi izt}\phi(t)\,dt.
$
Then it is easy to see that $\Phi$ is a holomorphic function on $\cc$.
Moreover, for any given $\mu\in(0,\,\pi/2)$, there exists a positive constant
$C:=C_{(\mu,\,\phi)}$ such that, for any
$z\in\{z\in\cc:\ |\arg z|<\mu\}$,
\begin{align}\label{eq 2.5}
|\Phi(z)|\le C\frac{|z|}{1+|z|^2}.
\end{align}
Indeed, when $|z|\le 1$, by the fact that $\int_\rr\phi(t)\,dt=0$,
we find that
\begin{align}\label{eq 2.6}
|\Phi(z)|
=\lf|\int_\rr\lf(e^{-2\pi itz}-1\r)\phi(t)\,dt\r|
\ls2\pi|z|\int_\rr|t\phi(t)|\,dt\ls|z|.
\end{align}
when $|z|>1$, by integration by parts, we have
\begin{align*}
|-2\pi i z\Phi(z)|
=\lf|\int_\rr -2\pi ize^{-2\pi izt}\phi(t)\,dt\r|
=\lf|\int_\rr e^{-2\pi izt}\frac{d}{dt}\phi(t)\,dt\r|\ls 1,
\end{align*}
namely, $|\Phi(z)|\ls |z|^{-1}$. This, combined with \eqref{eq 2.6},
implies \eqref{eq 2.5}.
Since $L$ is a nonnegative self-adjoint operator on $L^2(\rn)$,
by \cite[Lecture 2]{adm96}, we know that $L$ has a
holomorphic functional calculus.
By this and \eqref{eq 2.5}, for any $t\in(0,\,\fz)$, we could define
\begin{align*}
\Phi\lf(t\sqrt{L}\r)=\int_{\gamma_\pm}\Phi\lf(t\sqrt{\zeta}\r)(\zeta I-L)^{-1}\,d\zeta,
\end{align*}
where $\gamma_\pm:=\{re^{i\nu}:\ r\in(0,\,\fz)\}\cup\{re^{-i\nu}:\ r\in(0,\,\fz)\}$,
for any given $\nu\in(0,\,\pi/2)$, is a curve consisting of two rays parameterized anti-clockwise.
From this and the Laplace identity (see \cite[p.\,484]{ka95}), we further deduce that
\begin{align}\label{eq 2.17}
\Phi\lf(t\sqrt{L}\r)
&=\int_{\gamma_\pm}\Phi\lf(t\sqrt{\zeta}\r)(\zeta I-L)^{-1}\,d\zeta\noz\\
&=\int_{\gamma_\pm}\Phi\lf(t\sqrt{\zeta}\r)\int_{\{z=re^{\pm i\frac25\pi}:\ r\in(0,\,\fz)\}}
e^{\zeta z}e^{-zL}\,dz\,d\zeta.
\end{align}

For any $G\in T^2(\rnn)$, define
\begin{align*}
\pi_L(G)(x):=\int_0^\fz t^2L\Phi\lf(t\sqrt{L}\r)(G(\cdot,t))(x)\,\frac{dt}{t}.
\end{align*}
Then, from the bounded functional calculus of $L$, it follows that,
\begin{align*}
f=c_{(\Phi)}\int_0^\fz t^2L\Phi\lf(t\sqrt{L}\r)\lf(t^2L e^{-t^2L}(f)\r)\,\frac{dt}{t}
\quad\text{in}\quad L^2(\rn),
\end{align*}
where $c_{(\Phi)}$ is a positive constant such that
$$c_{(\Phi)}\int_0^\fz t^4\Phi(t)e^{-t^2}\,\frac{dt}{t}=1.$$
By the fact that $\pi_L$ is bounded from $T^2(\rnn)$ to $L^2(\rn)$ (see \cite[Lemma 3.4]{y08})
and \eqref{eq 2.7}, we find that
\begin{align}\label{eq 2.8}
f=c_{(\Phi)}\pi_L\lf(\sum_{j=1}^\fz\lz_j A_j\r)
=c_{(\Phi)}\sum_{j=1}^\fz\lz_j\pi_L(A_j)\quad\text{in}\quad L^2(\rn).
\end{align}

Next, we show that \eqref{eq 2.8} can be rewritten as
an atomic $(p(\cdot),\,r)$-representation of $f$ (see Definition \ref{def at-Hardy})
for any $r\in(1,\,\fz)$.
Indeed, for any
$(p(\cdot),\,\fz)_T$-atom $A$, associated with ball $B:=B(x_B,r_B)$,
define
\begin{align*}
a:=\pi_L(A)=\int_0^\fz t^2L\Phi\lf(t\sqrt{L}\r)(A(\cdot,t))\,\frac{dt}{t}.
\end{align*}
Then, by the finite speed propagation property of $L$ (see \cite[Lemma 3.5]{hlmmy11}),
we conclude that $\supp a\st 2B$.
Now we consider two cases.

Case i): $[m(x_B,V)]^{-1}\le 2r_B<\fz$.
In this case, by the fact that, for any $r\in(1,\,\fz)$,
$\pi_L$ is bounded from $T^r(\rnn)$ to $L^r(\rn)$ (see \cite[Lemma 3.4]{y08}),
we have
\begin{align*}
\|a\|_{L^r(\rn)}\ls\|A\|_{T^r(\rnn)}\ls|B|^{1/r}\|\chi_B\|_{\vp}^{-1}.
\end{align*}

Case ii): $0<2r_B<[m(x_B,V)]^{-1}$. In this case,
we know that there exists a unique $i_0\in\nn$ such that
$2^{i_0}r_B\le[m(x_B,V)]^{-1}<2^{i_0+1}r_B$.
Write
\begin{align}\label{eq 2.9x}
a(x)
&:=\lf[a(x)-\frac{1}{|2B|}\int_{2B}a(y)\,dy\,\chi_{2B}(x)\r]\noz\\
&\hs+\lf[\frac{1}{|2B|}\int_{2B}a(y)\,dy\,\chi_{2B}(x)-
\frac{1}{|2^{i_0+1}B|}\int_{2^{i_0+1}B}a(y)\,dy\,\chi_{2^{i_0+1}B}(x)\r]\noz\\
&\hs+\lf[\frac{1}{|2^{i_0+1}B|}\int_{2^{i_0+1}B}a(y)\,dy\,\chi_{2^{i_0+1}B}(x)\r]\noz\\
&=:{\rm I}(x)+{\rm II}(x)+{\rm III}(x).
\end{align}

For ${\rm I}$, it is easy to see that $\supp{\rm I}\st 2B$, $\int_{2B}{\rm I}(y)\,dy=0$
and
\begin{align*}
\|{\rm I}\|_{L^r(\rn)}
&\le\|a\|_{L^r(\rn)}+|2B|^{\frac1r-1}\int_{2B}|a(y)|\,dy\\
&\le 2\|a\|_{L^r(\rn)}\ls|B|^{1/r}\|\chi_B\|_{\vp}^{-1}.
\end{align*}

For ${\rm II}$, by the fact that $\supp a\st 2B$ and \eqref{eq 2.17}, we find that
\begin{align}\label{eq 2.9y}
\int_{2B}a(y)\,dy
&=\int_{2^{i_0}B}a(y)\,dy
=\int_{2^{i_0}B}\lf[\int_0^\fz t^2L\Phi\lf(t\sqrt{L}\r)(A(\cdot,t))(x)\,\frac{dt}{t}\r]\,dx\noz\\
&=\int_{2^{i_0}B}\lf[\int_0^\fz t^2\int_{\gamma_{\pm}}\Phi\lf(t\sqrt{\zeta}\r)\r.\noz\\
&\hs\times\lf.\int_{\{z=re^{\pm i\frac25\pi}:\ r\in(0,\,\fz)\}}e^{\zeta z}
Le^{-zL}(A(\cdot,t))(x)\,dz\,d\zeta\,\frac{dt}{t}\r]\,dx\noz\\
&=\int_{2^{i_0}B}\lf[\int_0^\fz t^2\int_{\gamma_{\pm}}\Phi\lf(t\sqrt{\zeta}\r)
\int_{\{z=re^{\pm i\frac25\pi}:\ r\in(0,\,\fz)\}}e^{\zeta z}\r.\noz\\
&\hs\hs\lf.\times\int_B\frac{\partial}{\partial z}K_z(x,\,y)A(y,t)\,dy\,dz\,
d\zeta\,\frac{dt}{t}\r]\,dx.
\end{align}
For any given $\delta\in(0,\,\mu_0)$ with
$\mu_0$ as in \eqref{eq mu}, we claim that there exists a positive
constant $C$ such that,
for any $z\in\Sigma^0_{\pi/5}$ and $y\in B(x_B,r_B)$,
\begin{align}\label{eq 2.16}
\lf|\int_{B(x_B,2[m(x_B,\,r_B)]^{-1})}\frac{\partial}{\partial z}K_z(x,\,y)\,dx\r|
\le C\frac{1}{|z|}\lf\{\frac{\sqrt{|z|}}{[m(x_B,\,r_B)]^{-1}}\r\}^\delta,
\end{align}
where $\Sigma^0_{\pi/5}$ is as in \eqref{eq 0.1}.
The proof of \eqref{eq 2.16} is totally similar to that of
\cite[(2.22)]{ccyy14} (see also the proof of \cite[Lemma A.5(b)]{ar03}),
the details being omitted.
From \eqref{eq 2.9y}, the Fubini theorem,
the fact that $\supp A\st\widehat{B}$, \eqref{eq 2.16}
and Remark \ref{rem-4}, we deduce that
\begin{align}\label{eq 2.10x}
\lf|\int_{2B}a(x)\,dx\r|
&\le\int_B\int_0^{r_B}t^2\int_{\gamma_\pm}\lf|\Phi\lf(t\sqrt{\zeta}\r)\r|
\int_{\{z=re^{\pm i\frac25\pi}:\ r\in(0,\,\fz)\}}\lf|e^{\zeta z}\r|\\
&\hs\times\lf|\int_{2^{i_0}B}\frac{\partial}{\partial z}K_z(x,\,y)\,dx\r||A(y,t)|\,
d|z|\,d|\zeta|\,\frac{dt}{t}\,dy\noz\\
&\ls\int_B\int_0^{r_B}t^2\int_{\gamma_\pm}\lf|\Phi\lf(t\sqrt{\zeta}\r)\r|
\int_{\{z=re^{\pm i\frac25\pi}:\ r\in(0,\,\fz)\}}\lf|e^{\zeta z}\r|\noz\\
&\hs\times\frac{1}{|z|}\lf(\frac{\sqrt{|z|}}{2^{i_0}r_B}\r)^\delta|A(y,t)|\,
d|z|\,d|\zeta|\,\frac{dt}{t}\,dy\noz\\
&\ls 2^{-i_0\delta}\int_B\int_0^{r_B}t^2\int_{\gamma_\pm}\lf|\Phi\lf(t\sqrt{\zeta}\r)\r|
\lf(\sqrt{|\zeta|}r_B\r)^{-\delta}|A(y,t)|\,d|\zeta|\,\frac{dt}{t}\,dy\noz\\
&\ls 2^{-i_0\delta}\int_B\int_0^{r_B}\lf(\frac{t}{r_B}\r)^\delta|A(y,t)|\,\frac{dt}{t}\,dy\noz\\
&\ls 2^{-i_0\delta}\|A\|_{T^2(\rnn)}\lf[\int_\rn\int_0^\fz\int_{B(x,\,t)}
\lf(\frac{t}{r_B}\r)^{2\delta}\chi_{\widehat{B}}(y,t)\,\dydt\,dx\r]^{1/2}\noz\\
&\ls 2^{-i_0\delta}|B|^{1/2}\|\chi_B\|_{\vp}^{-1}
\lf[\int_0^{r_B}\int_B\lf(\frac{t}{r_B}\r)^{2\delta}\,dy\,\frac{dt}{t}\r]^{1/2}\noz\\
&\ls 2^{-i_0\delta}|B|\|\chi_B\|_{\vp}^{-1}.\noz
\end{align}
This, combined with the fact that $\supp {\rm II}\st 2^{i_0+1}B$
and $2^{i_0+1}r_B>[m(x_B,V)]^{-1}$, implies that
\begin{align}\label{eq 2.10}
\|{\rm II}\|_{L^r(\rn)}
&\le\lf|\int_{2B}a(x)\,dx\r|\lf[\frac1{|2B|^{1/r'}}+\frac1{|2^{i_0+1}B|^{1/r'}}\r]\noz\\
&\ls 2^{-i_0\delta}|B|^{1-\frac1{r'}}\|\chi_B\|_{\vp}^{-1}
\ls2^{-i_0\delta}|B|^{1/r}\|\chi_B\|_{\vp}^{-1}.
\end{align}

For ${\rm III}$, by \eqref{eq 2.10x}, we know that
\begin{align}\label{eq 2.11}
\|{\rm III}\|_{L^r(\rn)}
\le\lf|\int_{2B}a(x)\,dx\r|\frac{1}{|2^{i_0+1}B|^{1/r'}}
\ls 2^{-i_0\delta}|B|^{1/r}\|\chi_B\|_{\vp}^{-1}.
\end{align}

From the above argument for ${\rm I}$,
${\rm II}$ and ${\rm III}$, \eqref{eq 2.8} and \eqref{eq 2.9x}, it follows that
\begin{align}\label{eq 2.9}
f&=c_{(\Phi)}\sum_{j=1}^\fz\lz_j\pi_L(A_j)\noz\\
&=c_{(\Phi)}\sum_{j\in\{j\in\nn:\ r_{B_j}\geq\frac12[m(x_{B_j},V)]^{-1}\}}\lz_j\pi_L(A_j)\noz\\
&\hs+c_{(\Phi)}\sum_{j\in\{j\in\nn:\ r_{B_j}<\frac12[m(x_{B_j},V)]^{-1}\}}
\lz_j[{\rm I}_j+\lz_{j,\,k_j}(h_{j,\,2}+h_{j,\,3})],
\end{align}
where $k_j\in\nn$ is the unique natural number such that
$2^{k_j}r_{B_j}\le[m(x_{B_j},V)]^{-1}<2^{k_j+1}r_{B_j}$,
\begin{align*}
\lz_{j,\,k_j}:=2^{-k_j\delta}
\frac{|B_j|^{1/r}\|\chi_{2^{k_j+1}B_j}\|_{\vp}}{|2^{k_j+1}B_j|^{1/r}\|\chi_{B_j}\|_{\vp}},
\end{align*}
\begin{align*}
h_{j,\,2}:=\frac{{\rm II}_j(x)|2^{k_j+1}B_j|^{1/r}\|\chi_{B_j}\|_{\vp}}
{|B_j|^{1/r}\|\chi_{2^{k_j+1}B_j}\|_{\vp}}
\end{align*}
and
\begin{align*}
h_{j,\,3}:=\frac{{\rm III}_j(x)|2^{k_j+1}B_j|^{1/r}\|\chi_{B_j}\|_{\vp}}
{|B_j|^{1/r}\|\chi_{2^{k_j+1}B_j}\|_{\vp}}.
\end{align*}
By \eqref{eq 2.10} and \eqref{eq 2.11}, we know that
$h_{j,\,2}$ and $h_{j,\,3}$ are $(p(\cdot),\,r)$-atoms and
\eqref{eq 2.9} is an atomic $(p(\cdot),\,r)$-representation of $f$,
up to a harmless constant multiple.
Thus, we have
\begin{align*}
\|f\|_{\ath}
&\ls\lf\|\lf\{\sum_{j\in\{j\in\nn:\ r_{B_j}\geq\frac12[m(B_j,V)]^{-1}\}}
\lf[\frac{|\lz_j|\chi_{2B_j}}{\|\chi_{2B_j}\|_\vp}\r]^{p_-}\r\}^{1/{p_-}}\r\|_\vp\\
&\hs+\lf\|\lf\{\sum_{j\in\{j\in\nn:\ r_{B_j}<\frac12[m(B_j,V)]^{-1}\}}
\lf[\frac{|\lz_j|\chi_{2B_j}}{\|\chi_{2B_j}\|_\vp}\r]^{p_-}\r\}^{1/{p_-}}\r\|_\vp\\
&\hs+\lf\|\lf\{\sum_{j\in\{j\in\nn:\ r_{B_j}<\frac12[m(B_j,V)]^{-1}\}}
\lf[\frac{|\lz_j\lz_{j,k_j}|\chi_{2^{k_j+1}B_j}}{\|\chi_{2^{k_j+1}B_j}\|_\vp}
\r]^{p_-}\r\}^{1/{p_-}}\r\|_\vp\\
&\ls\lf\|\lf\{\sum_{j\in\nn}
\lf[\frac{|\lz_j|\chi_{2B_j}}{\|\chi_{2B_j}\|_\vp}\r]^{p_-}\r\}^{1/{p_-}}\r\|_\vp\\
&\hs+\lf\|\lf\{\sum_{j\in\nn}\sum_{k\in\nn}
\lf[\frac{|\lz_j\lz_{j,k}|\chi_{2^{k+1}B_j}}{\|\chi_{2^{k+1}B_j}\|_\vp}
\r]^{p_-}\r\}^{1/{p_-}}\r\|_\vp\\
&=:{\rm J}+{\rm K}.
\end{align*}
We first estimate ${\rm K}$. By the fact that $\chi_{2^{k+1}B_j}(x)\le 2^{kn}\cm(\chi_{B_j})(x)$,
where $\cm$ is the Hardy-Littlewood maximal operator,
we find that, for any $s\in(0,\,p_-)$,
$\chi_{2^{k+1}B_j}(x)\le 2^{kn/s}[\cm(\chi_{B_j})(x)]^{1/s}$.
By the fact that $\mu_0+\frac{n}{r}>\frac{n}{p_-}$,
we could fix some $s\in(0,\,p_-)$ and $\delta\in(0,\,\mu_0)$
such that $\delta+\frac{n}{r}>\frac{n}{s}$.
Thus, from this, Remark \ref{rem-1}(i) and (ii) and Lemma \ref{lem fs},
we deduce that
\begin{align*}
{\rm K}
&\le\lf\|\lf\{\sum_{j\in\nn}\sum_{k\in\nn}\lf[2^{-k(\delta+\frac{n}{r}-\frac{n}{s})}
\frac{|\lz_j|[\cm(\chi_{B_j})]^{1/s}}{\|\chi_{B_j}\|_\vp}\r]^{p_-}\r\}^{1/{p_-}}\r\|_{\vp}\\
&=\lf\|\sum_{k\in\nn}\sum_{j\in\nn}\lf[2^{-k(\delta+\frac{n}{r}-\frac{n}{s})}
\frac{|\lz_j|[\cm(\chi_{B_j})]^{1/s}}{\|\chi_{B_j}\|_\vp}\r]^{p_-}\r\|^{\frac{1}{p_-}}
_{L^{\frac{p(\cdot)}{p_-}}(\rn)}\\
&\le\lf\{\sum_{k\in\nn}2^{-kp_-(\delta+\frac{n}{r}-\frac{n}{s})}
\lf\|\sum_{j\in\nn}\lf[\cm\lf(\frac{|\lz_j|^s}{\|\chi_{B_j}\|^s_{\vp}}\chi_{B_j}\r)
\r]^{\frac{p_-}{s}}\r\|_{L^{\frac{p(\cdot)}{p_-}}(\rn)}\r\}^{1/p_-}\\
&=\lf\{\sum_{k\in\nn}2^{-kp_-(\delta+\frac{n}{r}-\frac{n}{s})}
\lf\|\lf\{\sum_{j\in\nn}\lf[\cm\lf(\frac{|\lz_j|^s}{\|\chi_{B_j}\|^s_{\vp}}\chi_{B_j}\r)
\r]^{\frac{p_-}{s}}\r\}^{\frac{s}{p_-}}\r\|^{\frac{p_-}{s}}
_{L^{\frac{p(\cdot)}{s}}(\rn)}\r\}^{1/p_-}\\
&\ls\lf\{\sum_{k\in\nn}2^{-kp_-(\delta+\frac{n}{r}-\frac{n}{s})}
\lf\|\lf\{\sum_{j\in\nn}\lf[\frac{|\lz_j|^s}{\|\chi_{B_j}\|^s_{\vp}}\chi_{B_j}
\r]^{\frac{p_-}{s}}\r\}^{\frac{s}{p_-}}\r\|^{\frac{p_-}{s}}
_{L^{\frac{p(\cdot)}{s}}(\rn)}\r\}^{1/p_-}\\
&\sim\lf\{\sum_{k\in\nn}2^{-kp_-(\delta+\frac{n}{r}-\frac{n}{s})}
\lf\|\sum_{j\in\nn}\lf[\frac{|\lz_j|}{\|\chi_{B_j}\|_{\vp}}\chi_{B_j}
\r]^{p_-}\r\|_{L^{\frac{p(\cdot)}{p_-}}(\rn)}\r\}^{1/p_-}\\
&\ls\lf\{\sum_{k\in\nn}2^{-kp_-(\delta+\frac{n}{r}-\frac{n}{s})}
\ca(\{\lz_j\}_{j\in\nn},\{B_j\}_{j\in\nn})^{p_-}\r\}^{1/p_-}
\ls\ca(\{\lz_j\}_{j\in\nn},\{B_j\}_{j\in\nn}).
\end{align*}
By an argument similar to that used in the estimate of ${\rm K}$,
we also have ${\rm J}\ls\ca(\{\lz_j\}_{j\in\nn},\{B_j\}_{j\in\nn})$.
Hence,
$$\|f\|_{\ath}\ls\ca(\{\lz_j\}_{j\in\nn},\{B_j\}_{j\in\nn}).$$
This, combined with \eqref{eq 2.7x}, implies that
$\|f\|_{\ath}\ls\|f\|_{\vhp}$.
We complete the proof of Proposition \ref{pro-3}.
\end{proof}

We are now in a position to prove Theorem \ref{thm-2}.
\begin{proof}[Proof of Theorem \ref{thm-2}]
Let $\mu_0$ be as in \eqref{eq mu}, $p(\cdot)\in C^{\log}(\rn)$
with $\frac{n}{n+\mu_0}<p_-\le p_+\le1$ and $r\in(1,\,\fz)$ such that
$\mu_0+\frac{n}{r}>\frac{n}{p_-}$.
Then, by Proposition \ref{pro-2} and \ref{pro-3}, we know that,
$\ath$ and $\vhp$ coincide with equivalent quasi-norms. This finishes the proof
of Theorem \ref{thm-2}.
\end{proof}

\subsection{Proof of Corollary \ref{cor-1}}
In this subsection, we prove Corollary \ref{cor-1}.
We begin with introducing the atomic characterization of $\hp$.
\begin{definition}[\cite{ns12,cw14}]\label{def atom-1}
Let $p(\cdot)\in\cp(\rn)$, $r\in (p_+,\fz]\cap[1,\fz]$ and $s:=\lfloor\frac{n}{p_-}-n \rfloor$.
A measurable function $a$ on $\rn$ is called a $(p(\cdot),\,r,\,s)$-atom
associated with ball $B$ of $\rn$ if
\begin{enumerate}
\item[(i)] $\supp a\st B$;

\item[(ii)] $\|a\|_{L^r(\rn)}\le |B|^{\frac1r}\|\chi_B\|^{-1}_{\vp}$;

\item[(iii)] for any $\az\in\zz_+^n$ with $|\az|\le s$, $\int_\rn a(x)x^\az\,dx=0$.
\end{enumerate}
\end{definition}

The following lemma is just \cite[Theorem 1.1]{s13}, which
establishes the atomic characterization of $H^{p(\cdot)}(\rn)$ (see also \cite{ns12}).

\begin{lemma}[\cite{s13}]\label{lem 5.31}
Let $p(\cdot)\in C^{\log}(\rn)$ with $p_+\in(0,\,1]$.

{\rm (i)} Let $r\in(p_+,\,\fz]\cap[1,\,\fz]$ and $s:=\lfloor \frac{n}{p_-}-n\rfloor$.
Then there exists a positive constant $C$ such that,
for any $\{\lz_j\}_{j\in\nn}\st\cc$ and any family $\{a_j\}_{j\in\nn}$
of $(p(\cdot),\,r,\,s)$-atoms, associated with balls $\{B_j\}_{j\in\nn}$ of $\rn$, such that
$\ca(\{\lz_j\}_{j\in\nn},\,\{B_j\}_{j\in\nn})<\fz$, it holds true that
$f:=\sum_{j\in\nn}\lz_ja_j$ converges in $H^{p(\cdot)}(\rn)$ and
$\|f\|_{H^{p(\cdot)}(\rn)}\le C\ca(\{\lz_j\}_{j\in\nn},\,\{B_j\}_{j\in\nn}).$

{\rm (ii)} Let $s\in\zz_+$. For any $f\in H^{p(\cdot)}(\rn)$,
there exists a decomposition
$f=\sum_{j=1}^\fz\lz_ja_j$ in $\cs'(\rn),$
where $\{\lz_j\}_{j\in\nn}\st\cc$ and $\{a_j\}_{j\in\nn}$ is a family
of $(p(\cdot),\,\fz,\,s)$-atoms associated with balls $\{B_j\}_{j\in\nn}$ of $\rn$.
Moreover, there exists a positive constant $C$ such that, for any $f\in H^{p(\cdot)}(\rn)$,
\begin{align*}
\ca(\{\lz_j\}_{j\in\nn},\,\{B_j\}_{j\in\nn})\le C\|f\|_{H^{p(\cdot)}(\rn)}.
\end{align*}
\end{lemma}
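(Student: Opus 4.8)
The plan is to follow the classical Calder\'on--Zygmund--Fefferman--Stein scheme for atomic decompositions of Hardy spaces, adapting each step to the variable exponent setting via the results of Section \ref{s2}: the boundedness of the Hardy--Littlewood maximal operator $\cm$ on $\vp$ (Lemma \ref{lem 1-3}), the vector-valued Fefferman--Stein inequality (Lemma \ref{lem fs}), and Lemma \ref{lem-key}. Part (i) is the ``synthesis'' direction (an atomic sum lies in $H^{p(\cdot)}(\rn)$), and part (ii) is the ``analysis'' direction (every element of $H^{p(\cdot)}(\rn)$ admits an atomic decomposition). Indeed this is precisely \cite[Theorem 1.1]{s13} (see also \cite{ns12}), so I only sketch the argument.

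For part (i), first reduce, by a truncation argument, to the a priori estimate $\|\cm_N(\sum_{j\in F}\lz_j a_j)\|_{\vp}\ls\ca(\{\lz_j\}_{j\in F},\{B_j\}_{j\in F})$ for every finite $F\st\nn$; letting $F$ exhaust $\nn$ and using that $\ca(\{\lz_j\}_{j>M},\{B_j\}_{j>M})\to0$ as $M\to\fz$ (a consequence of $\ca(\{\lz_j\}_{j\in\nn},\{B_j\}_{j\in\nn})<\fz$) then yields both the convergence of $f=\sum_j\lz_j a_j$ in $H^{p(\cdot)}(\rn)$ and the desired quasi-norm bound. For the finite estimate, use the pointwise inequality $[\cm_N(\sum_j\lz_j a_j)]^{p_-}\le\sum_j|\lz_j|^{p_-}[\cm_N(a_j)]^{p_-}$ and split $\cm_N(a_j)=\cm_N(a_j)\chi_{2B_j}+\cm_N(a_j)\chi_{(2B_j)^{\com}}$. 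On $2B_j$, the bound $\cm_N(a_j)\ls\cm(a_j)$ together with the $L^r(\rn)$-boundedness of $\cm$ shows that $\|\chi_{B_j}\|_{\vp}\cm_N(a_j)\chi_{2B_j}$ is, up to a fixed constant, a function supported in $2B_j$ with $L^r(\rn)$-norm at most $|B_j|^{1/r}$; hence Lemma \ref{lem-key} (with $\kappa=2$) bounds the corresponding contribution by $C\ca(\{\lz_j\}_{j\in\nn},\{B_j\}_{j\in\nn})$. Off $2B_j$, the vanishing-moment condition of order $s=\lfloor\frac{n}{p_-}-n\rfloor$ and Taylor's formula give $\cm_N(a_j)(x)\ls\|\chi_{B_j}\|_{\vp}^{-1}(r_{B_j}/|x-x_{B_j}|)^{n+s+1}$ for $x\notin2B_j$, so that $\cm_N(a_j)\chi_{(2B_j)^{\com}}\ls\|\chi_{B_j}\|_{\vp}^{-1}\sum_{k\ge1}2^{-k(n+s+1)}\chi_{2^{k+1}B_j}$. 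Since $n+s+1>\frac{n}{p_-}$, one may fix $s_0\in(\frac{n}{n+s+1},\,p_-)$, use $\chi_{2^{k+1}B_j}\le2^{(k+1)n/s_0}[\cm(\chi_{B_j})]^{1/s_0}$, the sub-additivity of $t\mapsto t^{p_-}$, Remark \ref{rem-1}(ii), and the Fefferman--Stein inequality (Lemma \ref{lem fs}) with the exponent $p(\cdot)/s_0$ --- whose lower bound $p_-/s_0$ exceeds $1$ --- exactly as in the estimate of $\mathrm{K}$ in the proof of Proposition \ref{pro-3}; summing the geometric series in $k$ completes part (i).

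For part (ii), run the Fefferman--Stein grand-maximal-function decomposition. For each $k\in\zz$ let $O_k:=\{x\in\rn:\cm_N(f)(x)>2^k\}$, which is open with $|O_k|<\fz$ (as $\cm_N(f)\in\vp$); take a Whitney cube decomposition $O_k=\bigcup_iQ_{k,i}$ with the usual finite-overlap property of fixed dilates $Q_{k,i}^{*}$, a subordinate partition of unity $\{\eta_{k,i}\}_i$, and the bad functions $b_{k,i}:=(f-P_{k,i})\eta_{k,i}$, where $P_{k,i}$ is the unique polynomial of degree at most $s$ making $b_{k,i}$ have vanishing moments up to order $s$ against the measure $\eta_{k,i}\,dx$. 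The good functions $g_k:=f-\sum_ib_{k,i}$ satisfy $g_k\to f$ in $\cs'(\rn)$ as $k\to+\fz$ and $g_k\to0$ in $\cs'(\rn)$ as $k\to-\fz$, by the standard arguments using $f\in H^{p(\cdot)}(\rn)\st\cs'(\rn)$ and $\cm_N(f)<\fz$ almost everywhere; telescoping $f=\sum_{k\in\zz}(g_{k+1}-g_k)$ and decomposing each difference in the usual way produces $(p(\cdot),\,\fz,\,s)$-atoms $a_{k,i}$ supported in fixed dilates of $Q_{k,i}$ with $\|a_{k,i}\|_{L^\fz(\rn)}\ls\|\chi_{Q_{k,i}}\|_{\vp}^{-1}$ and coefficients $\lz_{k,i}\sim2^k\|\chi_{Q_{k,i}}\|_{\vp}$. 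Finally, the pointwise bound $\sum_{k,i}2^{kp_-}\chi_{Q_{k,i}}\ls\sum_k2^{kp_-}\chi_{O_k}\ls[\cm_N(f)]^{p_-}$ --- the first inequality by finite overlap, the second by summing the geometric series up to the index $k_0(x)$ with $2^{k_0(x)}\le\cm_N(f)(x)<2^{k_0(x)+1}$ --- gives
$$
\ca\big(\{\lz_{k,i}\}_{k,i},\{Q_{k,i}\}_{k,i}\big)
=\Big\|\Big(\sum_{k,i}\Big[\tfrac{|\lz_{k,i}|\chi_{Q_{k,i}}}{\|\chi_{Q_{k,i}}\|_{\vp}}\Big]^{p_-}\Big)^{\frac{1}{p_-}}\Big\|_{\vp}
\ls\big\|\cm_N(f)\big\|_{\vp}=\|f\|_{H^{p(\cdot)}(\rn)},
$$
which is the required estimate.

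The step I expect to be the main obstacle is part (ii): the Calder\'on--Zygmund--Stein construction of the bad functions $b_{k,i}$, the uniform $L^\fz(\rn)$ control of the resulting atoms, and the justification of the convergences in $\cs'(\rn)$ are classical (as in \cite{ns12,s13}) but somewhat technical. By contrast, the variable exponent enters only through the three tools from Section \ref{s2} listed above, and the one genuinely ``variable'' subtlety --- that $p(\cdot)/p_-$ sits exactly at the endpoint of the Fefferman--Stein inequality --- is already circumvented in part (i) by passing to $p(\cdot)/s_0$ with $s_0<p_-$, just as in Proposition \ref{pro-3}.
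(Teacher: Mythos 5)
The paper does not prove this lemma at all: it is imported verbatim as \cite[Theorem 1.1]{s13} (see also \cite{ns12}), so there is no in-paper argument to compare your proposal against. Your sketch reproduces the standard proof from those cited sources along the same lines the paper implicitly relies on: for (i), the $p_-$-power subadditivity, Lemma \ref{lem-key} for the part of $\cm_N(a_j)$ on $2B_j$, and the moment/decay estimate off $2B_j$ combined with the trick $\chi_{2^{k+1}B_j}\le 2^{(k+1)n/s_0}[\cm(\chi_{B_j})]^{1/s_0}$ and Lemma \ref{lem fs} at the exponent $p(\cdot)/s_0$ with $s_0\in(\frac{n}{n+s+1},p_-)$, exactly parallel to the estimate of ${\rm K}$ in Proposition \ref{pro-3}; for (ii), the Calder\'on--Zygmund--Stein level-set decomposition of $\cm_N(f)$ with Whitney cubes and correcting polynomials of degree $\le s$, and the coefficient bound via $\sum_{k,i}2^{kp_-}\chi_{Q_{k,i}}\ls[\cm_N(f)]^{p_-}$. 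This is the correct approach, and the details you defer are precisely the technical ones carried out in \cite{ns12,s13}. One small caveat in (i): the statement permits $r=1$ when $p_+<1$, and there $\cm$ is not bounded on $L^1(\rn)$, so the near-ball estimate needs a minor adjustment (e.g., replacing the $L^r$ bound of $\cm(a_j)\chi_{2B_j}$ by an argument using only $\|a_j\|_{L^1(\rn)}\le|B_j|\,\|\chi_{B_j}\|_{\vp}^{-1}$, as in the cited sources); this is immaterial for the paper, which applies Lemma \ref{lem 5.31} only with $(p(\cdot),\fz,s)$-atoms in the proof of Corollary \ref{cor-1}.
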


We now prove Corollary \ref{cor-1}.
\begin{proof}[Proof of Corollary \ref{cor-1}]
By Lemma \ref{lem 5.31}(ii) and its proof,
we find that $H^{p(\cdot)}(\rn)\cap L^2(\rn)$ is dense in $H^{p(\cdot)}(\rn)$
and, for any $f\in [H^{p(\cdot)}(\rn)\cap L^2(\rn)]$, there exists a sequence
$\{\lz_j\}_{j\in\nn}\st\cc$ and a family $\{a_j\}_{j\in\nn}$
of $(p(\cdot),\,\fz,\,s)$-atoms,
associated with balls $\{B_j\}_{j\in\nn}$ of $\rn$,
such that $f=\sum_{j\in\nn}\lz_j a_j$ in $L^2(\rn)$
and $\ca(\{\lz_j\}_{j\in\nn},\,\{B_j\}_{j\in\nn})\ls\|f\|_{\hp}$.
Notice that each $(p(\cdot),\,\fz,\,0)$-atom
is also an $(p(\cdot),\,r)$-atom for any $r\in(1,\,\fz]$ (see Definition \ref{def atom}).
From this and the definition of $\ath$,
we further deduce that $f\in\ath$ and
$$\|f\|_{\ath}\le\ca(\{\lz_j\}_{j\in\nn},\,\{B_j\}_{j\in\nn})\ls\|f\|_{\hp}.$$
By this, Theorem \ref{thm-2} and an density argument,
we know that, for any $f\in\hp$,
$$\|f\|_{\vhp}\ls\|f\|_{\hp}.$$
This finishes the proof of Corollary \ref{cor-1}.
\end{proof}

\noindent
{\bf Acknowledgments.} Junqiang Zhang is very grateful to his advisor
Professor Dachun Yang for his guidance and encouragements.
Junqiang Zhang is
supported by the Fundamental Research Funds for the Central Universities
(Grant No. 2018QS01) and the National
Natural Science Foundation of China (Grant No. 11801555).
Zongguang Liu is supported by the National
Natural Science Foundation of China (Grant No. 11671397).

\bibliographystyle{amsplain}

\end{document}